\def\NAT@def@citea{\def\@citea{\NAT@separator}}
\theoremstyle{plain}
\newtheorem{theorem}{Theorem}[section]
\newtheorem{lemma}[theorem]{Lemma}
\newtheorem{corollary}[theorem]{Corollary}
\theoremstyle{definition}
\newtheorem{example}[theorem]{Example}
\theoremstyle{remark}
\newtheorem{remark}{Remark}
\newcommand{\la}{\left \langle}
\newcommand{\ra}{\right \rangle}
\newcommand{\q}{\sqrt{1 - q^2}}
\newcommand{\Tx}{\langle Tx, x \rangle}
\newcommand{\Txy}{\langle Tx, y \rangle}
\newcommand{\Txz}{\langle Tx, z \rangle}
\newcommand{\h}{\mathcal{H}_1}
\newcommand{\hh}{\mathcal{H}_2}
\newcommand{\hhh}{\mathcal{H}_1 \oplus \mathcal{H}_2}
\newcommand{\norm}[1]{\left\lVert#1\right\rVert}
\begin{document}


\title{New upper bounds for the $q$-numerical radius of Hilbert space operators}

\author{
\name{Arnab Patra\textsuperscript{a}\thanks{CONTACT Arnab Patra. Email: arnabp@iitbhilai.ac.in} and Falguni Roy\textsuperscript{b}}
\affil{\textsuperscript{a}Department of Mathematics, Indian Institute of Technology Bhilai, GEC campus, Raipur, India 492015; \textsuperscript{b}Department of Mathematical and Computational Sciences, National Institute of Technology Karnataka, Surathkal, India 575025}
}

\maketitle

\begin{abstract}
This article introduces several new upper bounds for the $q$-numerical radius of bounded linear operators on complex Hilbert spaces. Our results refine some of the existing upper bounds in this field. The $q$-numerical radius inequalities of products and commutators of operators follow as special cases. Finally, some new inequalities for the $q$-numerical radius of $2 \times 2$ operator matrices are established. 
\end{abstract}

\begin{keywords}
q-numerical range; q-numerical radius; operator matrix
\end{keywords}
\begin{amscode}
Primary 47A12; 47A30\\
Secondary 15A60  
\end{amscode}

\section{Introduction}
Let $\mathcal{H}$ denotes a complex Hilbert space with the inner product $\la .,. \ra$ and $\mathcal{B(H)}$ denotes the $C^*$-algebra of bounded linear operators on $\mathcal{H}$. For $T \in \mathcal{B(H)},$ the operator norm of $T$ can be defined as
\[\|T\| = \sup_{\|x\| = 1} \|Tx\|.\]
Another expression of $\|T\|$ in terms of the inner product is as follows
\[\|T\| = \sup_{\|x\| = \|y\| = 1} |\la Tx, y \ra|.\]
A norm $|||.|||$ on $\mathcal{B(H)}$ is said to be a unitarily invariant norm if it satisfies $|||UTV||| = |||T|||$ for all $T \in \mathcal{B(H)}$ and for all unitary operators $U$ and $V$ in $\mathcal{B(H)}.$ A norm $|||.|||$ on $\mathcal{B(H)}$ is said to be a weakly unitarily invariant norm if it satisfies $|||UTU^*||| = |||T|||$ for all $T \in \mathcal{B(H)}$ and for all unitary operators $U$ in $\mathcal{B(H)}.$

The numerical range of $T$, which is denoted by $W(T)$, is defined by
\[W(T) = \{\la Tx,x \ra : x \in \mathcal{H}, \  \|x\| = 1\}.\]
The most important properties of $W(T)$ are that it always forms a convex set and its closure contains the spectrum of $T.$ The numerical radius $\omega(T)$ and the Crawford number $c(T)$ of $T \in \mathcal{B(H)}$ are defined by
\begin{eqnarray*}
 \omega(T) &=& \sup_{\|x\| = 1} |\la Tx, x \ra|,\\
 c(T) &=& \inf_{\|x\| = 1} |\la Tx, x \ra|.  
\end{eqnarray*}
The numerical radius $\omega(T)$ defines a weakly unitarily invariant norm in $\mathcal{B(H)}$. The operator norm and numerical radius are both equivalent which follows from the following well-known inequality
\begin{equation} \label{int1}
    \frac{\|T\|}{2} \leq \omega(T) \leq \|T\|.
\end{equation}
The above inequalities are sharp. Equality holds in the first inequality if $T^2 = 0$ and in the second inequality if $T$ is a normal operator. For the past several years researchers have attempted to refine the above inequality. Kittaneh \cite{kittaneh2003numerical,kittaneh2005numerical} proved, respectively, that, if $T \in \mathcal{B(H)}$, then
\begin{equation} \label{int2}
    \omega(T) \leq \frac{1}{2} \||T| + |T^*|\| \leq \frac{1}{2} \left(\|T\| + \sqrt{\|T^2\|} \right),
\end{equation}
\begin{equation} \label{int3}
     \frac{1}{4} \|T^*T + TT^*\| \leq \omega^2(T) \le \frac{1}{2} \| T^*T + TT^* \|, 
\end{equation}
and if $A, B, C, D, S, T \in \mathcal{B(H)}$ then
\begin{equation} \label{eq8}
    \omega(ATB + CSD) \leq \frac{1}{2} \| A|T^*|^{2(1 - \alpha)} A^* + B^* |T|^{2 \alpha} B + C |S^*|^{2(1 - \alpha)} C^* + D^* |S|^{2 \alpha} D \|,
\end{equation}
for all $\alpha \in [0,1]$ where $|T| = (T^*T)^{\frac{1}{2}},$ the absolute value of $T.$ Later on these bounds were refined extensively. For a detailed review of the numerical radius inequalities, we refer to the book \cite{bhunia2022lectures}.

There are several generalizations of the classical numerical range exist in the literature. Our focus will be on the $q$-numerical range and its radius of an operator. Let $T \in \mathcal{B(H)}$ and $q \in [0,1].$ The $q$-numerical range $W_q(T)$ and $q$-numerical radius $\omega_q(T)$ of $T$ are defined respectively as
\begin{eqnarray*}
    W_q(T) &=& \{\la Tx, y \ra : x,y \in \mathcal{H}, \ \|x\| = \|y\| = 1, \ \la x,y \ra = q\},\\
    \omega_q(T) &=& \sup_{z \in W_q(T)} |z|.
\end{eqnarray*}
It is easy to verify that if $q = 1$ then $W_q(T)$ reduces to the classical numerical range $W(T).$ The set $W_q(T)$ was first introduced by Marcus and Andresen \cite{marcus1977constrained} in 1977 for a linear transformation $T$ defined over an $n$-dimensional unitary space. Nam-Kiu Tsing \cite{tsing1984constrained} established the convexity of the $q$-numerical range. Several properties of $W_q(T)$ are discussed by Li et al. \cite{li1994generalized} and Li and Nakazato \cite{li1998some}. Chien and Nakazato \cite{chien2002davis} described the boundary of the $q$-numerical range of a square matrix  using the concept of the Davis-Wieldant shell. The $q$-numerical range of shift operators is also studied \cite{chien2007q,chien2012numerical}. Duan \cite{duan2009perfect} draws attention to the vital significance that the idea of $q$-numerical range plays in characterizing the perfect distinguishability of quantum operations. Recently, Moghaddam et al. \cite{fakhri2022q} have studied several $q$-numerical radius bounds. The following are a few of the inequalities they have derived. 
\begin{eqnarray}
   && \frac{q}{2(2 - q^2)} \|T\| \leq \omega_q(T) \leq \|T\|,\label{oth1}\\
   && \omega^2_q(T) \leq \frac{q^2}{4} \left(\|T\| + \sqrt{\|T^2\|}\right)^2 + (1 - q^2 + 2q \q) \| |T| \|^2 ,\label{oth2}\\
   && \frac{q^2}{4(2 - q^2)^2} \| T^*T + TT^* \| \leq \omega^2(T) \leq \frac{q^2}{2(1 - \q)^2} \| T^*T + TT^* \label{oth3}\|.
\end{eqnarray}
These results are in fact generalizations of the corresponding inequalities in (\ref{int1}), (\ref{int2}), and (\ref{int3}) respectively for numerical radius. 

Our interest in this paper lies in the direction of obtaining refined $q$-numerical radius inequalities. In section 2 we established upper bounds for $q$-numerical radius that generalize the results of \cite{fakhri2022q}. In addition, the $q$-numerical radius bounds for $2 \times 2$ operator matrices are also discussed in section 3. Several examples with figures are provided to supplement the results.

\section{$q$-numerical radius of $T \in \mathcal{B(H)}$}

First, we record a few important properties of the $q$-numerical radius in the following lemmas.

\begin{lemma} \cite[P. 380]{gau2021numerical}
    Let $T \in \mathcal{B(H)}$ and $q \in [0,1]$, then
    \begin{enumerate}
        \item[(i)] if $\dim \mathcal{H} = 1$ then $W_q(T)$ is non-empty if and only if $q = 1$ and for $\dim \mathcal{H} \geq 2,$ $W_q(T)$ is always non-empty,
        \item[(ii)] $W_q(T)$ is a bounded subset of $\mathbb{C}$ and it is compact if $\mathcal{H}$ is finite-dimensional,
        \item[(iii)] $W_q(U^*TU) = W_q(T)$ for any unitary operator $U \in \mathcal{B(H)},$
        \item[(iv)] $W_q(aT + bI) = a W_q(T) + bq$ for complex numbers $a$ and $b,$
        \item[(v)] $W_{\lambda q}(T) = \lambda W_q(T)$ for any complex numeber $\lambda$ with $|\lambda| = 1,$
        \item[(vi)] $W_q(T)^* = W_q(T)^* = \{\overline{z} : z \in W_q(T)\},$
        \item[(vii)] $q \sigma(T) \subseteq \overline{W_q(T)}.$
    \end{enumerate}
\end{lemma}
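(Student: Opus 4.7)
The lemma collects seven properties that amount to careful unpacking of the defining constraints $\|x\|=\|y\|=1$ and $\langle x,y\rangle = q$; the plan is to treat each item by a short direct argument. For (i), if $\dim\mathcal{H}=1$ then every pair of unit vectors $x,y$ satisfies $|\langle x,y\rangle|=1$, forcing $q=1$; conversely, if $\dim\mathcal{H}\geq 2$, then given any unit $x$ one writes $y = qx + \sqrt{1-q^{2}}\,z$ for some unit $z\perp x$, producing a valid pair. Part (ii) is immediate: Cauchy--Schwarz gives $|\langle Tx,y\rangle|\leq\|T\|$, and in finite dimension the constraint set is compact while the map $(x,y)\mapsto\langle Tx,y\rangle$ is continuous.

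Properties (iii)--(vi) reduce to direct substitutions. For (iii), the map $(x,y)\mapsto(Ux,Uy)$ is a bijection of the constraint set onto itself with $\langle U^{*}TUx,y\rangle = \langle T(Ux),Uy\rangle$, giving two inclusions upon swapping $U$ with $U^{*}$. For (iv), linearity in $T$ combined with $\langle x,y\rangle=q$ yields $\langle(aT+bI)x,y\rangle = a\langle Tx,y\rangle + bq$ at once. For (v), if $\langle x,y\rangle = \lambda q$ with $|\lambda|=1$, then $y' = \overline{\lambda}y$ is a unit vector satisfying $\langle x,y'\rangle = q$ and $\langle Tx,y\rangle = \lambda\langle Tx,y'\rangle$, so the two sets $\lambda W_{q}(T)$ and $W_{\lambda q}(T)$ are identified. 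For (vi), read here as $W_{q}(T^{*}) = \{\overline{z} : z\in W_{q}(T)\}$, one uses $\langle T^{*}x,y\rangle = \overline{\langle Ty,x\rangle}$ together with the observation that $q\in[0,1]$ is real, so $\langle x,y\rangle = q$ is equivalent to $\langle y,x\rangle = q$.

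The principal obstacle is (vii), where the natural attack is through the approximate point spectrum $\sigma_{\mathrm{ap}}(T)$. For $\lambda\in\sigma_{\mathrm{ap}}(T)$, pick unit vectors $x_{n}$ with $\|(T-\lambda)x_{n}\|\to 0$ and, using the construction from (i), select unit $y_{n}$ with $\langle x_{n},y_{n}\rangle = q$; this yields $\langle Tx_{n},y_{n}\rangle \to \lambda q$, so $\lambda q\in\overline{W_{q}(T)}$. The delicate case is $\lambda\in\sigma(T)\setminus\sigma_{\mathrm{ap}}(T)$: for such $\lambda$ the operator $T-\lambda$ is bounded below but has non-dense range, whence $\overline{\lambda}$ is an eigenvalue of $T^{*}$. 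Applying the previous step to $T^{*}$ gives $\overline{\lambda}\,q\in\overline{W_{q}(T^{*})}$, and invoking (vi) together with $\overline{q}=q$ transfers this to $\lambda q\in\overline{W_{q}(T)}$. Since $\sigma(T)=\sigma_{\mathrm{ap}}(T)\cup(\sigma(T)\setminus\sigma_{\mathrm{ap}}(T))$, this covers all of $q\sigma(T)$ and completes the plan.
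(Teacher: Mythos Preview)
Your arguments for items (i)--(vii) are correct and constitute a complete, self-contained proof; in particular, your treatment of (vii) via the approximate point spectrum, together with the adjoint trick for the residual spectrum, is the standard and cleanest route. Note, however, that the paper does not supply its own proof of this lemma: it is simply quoted from \cite[p.~380]{gau2021numerical} as background material, so there is no in-paper argument to compare against. Your proposal therefore goes beyond what the paper does here, and your reading of (vi) as $W_q(T^*)=\{\overline{z}:z\in W_q(T)\}$ correctly repairs the evident typo in the statement.
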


\begin{lemma} \cite{fakhri2022q}
    The $q$-numerical radius defines a semi-norm on $\mathcal{B(H)}.$
\end{lemma}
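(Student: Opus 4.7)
The plan is to verify directly the three defining properties of a semi-norm on $\mathcal{B(H)}$: non-negativity, absolute homogeneity $\omega_q(\lambda T) = |\lambda|\, \omega_q(T)$ for every $\lambda \in \mathbb{C}$, and the triangle inequality $\omega_q(T+S) \leq \omega_q(T) + \omega_q(S)$. All three should fall out of the defining formula
\[
\omega_q(T) = \sup\bigl\{ |\la Tx, y \ra| : \|x\| = \|y\| = 1, \ \la x, y \ra = q \bigr\}
\]
together with linearity of the inner product. The preceding lemma guarantees that (for $\dim \mathcal{H} \geq 2$) the constraint set is non-empty, so the supremum is taken over a non-empty set.

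Non-negativity is immediate because $\omega_q(T)$ is a supremum of moduli. For absolute homogeneity I would use $\la \lambda Tx, y \ra = \lambda \la Tx, y \ra$ to conclude $W_q(\lambda T) = \lambda\, W_q(T)$, and taking the supremum of $|\cdot|$ on both sides gives $\omega_q(\lambda T) = |\lambda|\, \omega_q(T)$.

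The triangle inequality is the only step that requires a moment of thought, though it too is essentially routine. For any admissible pair $(x,y)$ with $\|x\| = \|y\| = 1$ and $\la x, y \ra = q$, linearity of the inner product and the triangle inequality in $\mathbb{C}$ yield
\[
|\la (T+S)x, y \ra| \leq |\la Tx, y \ra| + |\la Sx, y \ra| \leq \omega_q(T) + \omega_q(S),
\]
where the last bound uses that $\la Tx, y \ra \in W_q(T)$ and $\la Sx, y \ra \in W_q(S)$ for the \emph{same} constrained pair. Taking the supremum on the left-hand side over admissible $(x,y)$ completes the argument.

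The main (and mild) obstacle is exactly this simultaneous witnessing: one must observe that a single pair $(x,y)$ satisfying the constraint bounds both summands, so no slack is introduced by the constraint $\la x, y \ra = q$. I would also remark that $\omega_q$ is generally only a semi-norm and not a norm, since by property (iv) of the preceding lemma one has $W_q(I) = \{q\}$ and hence $\omega_q(I) = q$, which vanishes when $q = 0$.
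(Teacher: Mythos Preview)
Your verification is correct and entirely standard. Note, however, that the paper does not supply its own proof of this lemma: it is simply quoted from \cite{fakhri2022q} without argument, so there is no ``paper's approach'' to compare against. Your direct check of non-negativity, absolute homogeneity, and subadditivity from the defining supremum is exactly how one would justify the statement, and your closing remark that $\omega_0(I)=0$ explains why only a semi-norm (rather than a norm) is claimed.
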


\begin{lemma} \cite[Proposition 2.11]{li1994generalized}
    Let $T \in \mathcal{B(H)}$ and $m(T) = \min \{ \| T - \lambda I\| : \lambda \in \mathbb{C}\}$ then
    \[ \mbox{either } W_0(T) = \{ z : |z| < m(T) \} \mbox{ or, } W_0(T) = \{ z : |z| \leq m(T) \}.\]
\end{lemma}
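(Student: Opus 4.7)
I would first show that $W_0(T)$ is a disk centered at the origin and then identify its radius as $m(T)$. For any orthogonal unit vectors $x,y$ and any $\theta\in\mathbb{R}$, the pair $(x,e^{i\theta}y)$ is again admissible and $\langle Tx,e^{i\theta}y\rangle=e^{-i\theta}\langle Tx,y\rangle$, so $W_0(T)$ is invariant under rotations about $0$. Since $W_0(T)$ is also convex (by Tsing's theorem) and bounded (by Cauchy--Schwarz, $|\langle Tx,y\rangle|\leq\|T\|$), any such subset of $\mathbb{C}$ must coincide with either $D(0,R)$ or $\overline{D(0,R)}$, where $R:=\sup\{|z|:z\in W_0(T)\}$; the two cases are distinguished by whether $R$ is attained, because by rotational symmetry one boundary point in $W_0(T)$ forces the whole circle into $W_0(T)$. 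It therefore suffices to prove $R=m(T)$.

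For the upper bound $R\leq m(T)$: since $\langle x,y\rangle=0$, we have $\langle Tx,y\rangle=\langle(T-\lambda I)x,y\rangle$ for every $\lambda\in\mathbb{C}$, and Cauchy--Schwarz yields $|\langle Tx,y\rangle|\leq\|T-\lambda I\|$; taking the infimum over $\lambda$ gives $|\langle Tx,y\rangle|\leq m(T)$. For the matching lower bound, fix a unit $x$; since $\dim\mathcal{H}\geq 2$, the supremum of $|\langle Tx,y\rangle|$ over unit $y\in\{x\}^\perp$ equals $r_x:=\sqrt{\|Tx\|^2-|\langle Tx,x\rangle|^2}$, attained (when $r_x>0$) at $y=(Tx-\langle Tx,x\rangle x)/r_x$. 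Hence $R=\sup_{\|x\|=1}r_x$, and the projection identity $r_x^2=\min_\lambda\|(T-\lambda I)x\|^2$ re-derives the upper bound $R\leq m(T)$ from this expression.

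The reverse inequality $\sup_{\|x\|=1}r_x\geq m(T)$ is the principal obstacle. I would invoke a Stampfli-type argument: the function $\lambda\mapsto\|T-\lambda I\|$ is convex and coercive (since $\|T-\lambda I\|\geq|\lambda|-\|T\|$), so it attains its minimum at some $\lambda_0$. A first-order analysis at $\lambda_0$ then produces a sequence of unit vectors $x_n$ with $\|(T-\lambda_0 I)x_n\|\to m(T)$ and $\langle(T-\lambda_0 I)x_n,x_n\rangle\to 0$; were this to fail, one could perturb $\lambda_0$ along a direction aligned with a nonzero cluster point of $\langle(T-\lambda_0 I)x_n,x_n\rangle$ and thereby strictly decrease $\|T-\lambda I\|$, contradicting the minimality of $\lambda_0$. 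Because $r_x$ is invariant under the shift $T\mapsto T-\lambda_0 I$ (viewed as $\max_{\|y\|=1,\,y\perp x}|\langle Tx,y\rangle|$, the condition $\langle x,y\rangle=0$ annihilates the shift), we obtain $r_{x_n}\to m(T)$, hence $R\geq m(T)$. The open/closed dichotomy in the statement then corresponds precisely to whether $\sup_x r_x$ is attained by some unit vector.
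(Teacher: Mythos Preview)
The paper does not supply its own proof of this lemma; it is quoted verbatim from \cite{li1994generalized} (Proposition~2.11) and left unproved. Your proposal is correct and essentially reconstructs the standard argument. The rotational-invariance-plus-convexity step reducing the problem to the identification of the radius $R$ is clean, and your formula $R=\sup_{\|x\|=1}\sqrt{\|Tx\|^2-|\langle Tx,x\rangle|^2}$ together with the Stampfli variational argument showing this supremum equals $m(T)$ is precisely Prasanna's identity $m(T)^2=\sup_{\|x\|=1}\bigl(\|Tx\|^2-|\langle Tx,x\rangle|^2\bigr)$, which the paper in fact quotes immediately after the lemma. So your route coincides with the material the paper assembles around the statement, even though the paper itself defers the proof to the cited source; there is nothing to compare against, and no gap in what you wrote.
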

The number $m(T)$ is known as the transcendental radius of $T$. Stampfli \cite{stampfli1970norm} proved that there exists a unique complex number $\mu \in \overline{W(T)}$ such that
\[m(T) =  \min \{ \| T - \lambda I\| : \lambda \in \mathbb{C}\} = \| T - \mu I \|.\]
Prasanna \cite{prasanna1981norm} derived another expression for $m(T)$ which is 
\[m^2(T) = \sup_{\|x\| = 1} (\|Tx\|^2 - |\la Tx,x \ra|^2).\]

For our study, the following results are crucial. 

\begin{lemma}{(Bessel's Inequality)}
Let $\mathcal{E}$ be a orthonormal set in $\mathcal{H}$ and $h \in \mathcal{H},$ then
\[\sum_{e \in \mathcal{E}} |\langle h, e \rangle|^2 \leq \|h\|^2.\]
\end{lemma}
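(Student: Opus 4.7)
The plan is to reduce to the finite case and then pass to the sum over $\mathcal{E}$ by taking a supremum. First I would fix an arbitrary finite subset $\{e_1, e_2, \ldots, e_n\} \subseteq \mathcal{E}$ and form the truncated Fourier approximation $p = \sum_{i=1}^n \langle h, e_i \rangle e_i$. The central computation is to expand $\|h - p\|^2 = \langle h - p, h - p \rangle$ using the orthonormality $\langle e_i, e_j \rangle = \delta_{ij}$. This expansion will give
\[
\|h - p\|^2 = \|h\|^2 - \sum_{i=1}^n |\langle h, e_i \rangle|^2,
\]
because the cross terms $\langle h, p \rangle$ and $\langle p, h \rangle$ both equal $\sum_{i=1}^n |\langle h, e_i \rangle|^2$, while $\|p\|^2$ also equals the same sum (and therefore cancels one copy).

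Since the left-hand side is a norm squared it is non-negative, which immediately yields $\sum_{i=1}^n |\langle h, e_i \rangle|^2 \leq \|h\|^2$ for every finite subset of $\mathcal{E}$.

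To handle the possibly uncountable orthonormal set $\mathcal{E}$, I would interpret $\sum_{e \in \mathcal{E}} |\langle h, e \rangle|^2$ as the supremum of the finite partial sums; this is the standard convention for unordered sums of non-negative reals. The uniform bound established above then forces the supremum to be at most $\|h\|^2$. As a side benefit this shows that $\{e \in \mathcal{E} : \langle h, e \rangle \neq 0\}$ is at most countable, although that observation is not needed for the stated inequality.

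The only conceptual step requiring a moment of care is this last passage from the finite case to the general set $\mathcal{E}$; the inner-product expansion itself is routine. No deep tool beyond the positivity of the norm on $\mathcal{H}$ is needed, so I do not anticipate a substantive obstacle.
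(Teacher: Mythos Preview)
Your argument is the standard and correct proof of Bessel's inequality: expand $\|h-p\|^2\ge 0$ with $p=\sum_{i=1}^n\langle h,e_i\rangle e_i$ to get the finite-subset bound, then take the supremum over finite subsets of $\mathcal{E}$ to conclude. There is no gap.

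As for comparison with the paper: the paper does not actually prove this lemma. It is recorded there as a classical prerequisite (on the same footing as the mixed Cauchy--Schwarz inequality of Furuta that follows it) and is invoked later, e.g.\ in the proof of Theorem~\ref{th2}, to deduce $|\langle Tx,z\rangle|^2\le\|Tx\|^2-|\langle Tx,x\rangle|^2$ from an orthonormal set containing $x$ and $z$. So your proposal supplies a proof where the paper simply cites the result; there is nothing to compare beyond that.
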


\begin{lemma} \cite{furuta1986simplified} \label{lemma1}
    If $T \in \mathcal{B(H)},$ then
    \[|\la Tx,y \ra|^2 \leq \la |T|^{2 \alpha} x, x \ra \la |T^*|^{2 (1 - \alpha)}y,y \ra,\]
    for all $x,y \in \mathcal{H}$ and $\alpha \in [0,1].$
\end{lemma}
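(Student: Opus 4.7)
The plan is to prove this classical mixed Schwarz inequality via the polar decomposition of $T$ together with the ordinary Cauchy--Schwarz inequality in $\mathcal{H}$, which is essentially the Heinz--Kato route used by Furuta.

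First I would write the polar decomposition $T = U|T|$, where $U$ is the partial isometry with initial space $\overline{\operatorname{ran}|T|}$ and final space $\overline{\operatorname{ran}|T^*|}$. For $\alpha \in [0,1]$ the Borel functional calculus on the positive operator $|T|$ gives $|T| = |T|^{\alpha}\, |T|^{1-\alpha}$, so that
\[
\langle Tx, y \rangle = \langle U|T|x, y \rangle = \langle |T|x, U^{*}y \rangle = \langle |T|^{\alpha}x,\ |T|^{1-\alpha}U^{*}y \rangle,
\]
using self-adjointness of $|T|^{\alpha}$. Applying Cauchy--Schwarz to this rewriting produces
\[
|\langle Tx, y\rangle|^{2} \leq \bigl\||T|^{\alpha}x\bigr\|^{2}\,\bigl\||T|^{1-\alpha}U^{*}y\bigr\|^{2} = \langle |T|^{2\alpha}x, x\rangle\, \bigl\langle U|T|^{2(1-\alpha)}U^{*}y, y \bigr\rangle,
\]
which already has the first factor on the right-hand side of the desired inequality.

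The remaining step is to identify the bracket $\langle U|T|^{2(1-\alpha)}U^{*}y, y\rangle$ with $\langle |T^{*}|^{2(1-\alpha)}y, y\rangle$. For the integer exponent case this is immediate from $TT^{*} = U|T|^{2}U^{*} = |T^{*}|^{2}$. For the fractional exponent $2(1-\alpha) \in [0,2]$, I would argue via functional calculus: the partial isometry $U$ restricts to a unitary from $\overline{\operatorname{ran}|T|}$ onto $\overline{\operatorname{ran}|T^{*}|}$ and therefore intertwines the spectral measures of $|T|^{2}$ and $|T^{*}|^{2}$, giving $U\, f(|T|^{2})\, U^{*} = f(|T^{*}|^{2})$ for every Borel function $f$ vanishing at $0$. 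Choosing $f(t) = t^{1-\alpha}$ yields $U|T|^{2(1-\alpha)}U^{*} = |T^{*}|^{2(1-\alpha)}$ on all of $\mathcal{H}$ (the values on the kernel complement are both zero), which plugged into the previous display completes the proof.

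The main obstacle I anticipate is precisely this last identity when $T$ has nontrivial kernel or non-closed range, so that $U$ is only a partial isometry rather than a unitary; one must be careful that the fractional power on the right-hand side is interpreted on $\overline{\operatorname{ran}|T^{*}|}$ and extended by zero, matching the action of $U|T|^{2(1-\alpha)}U^{*}$. Once this functional-calculus bookkeeping is handled, the estimate reduces to the two routine ingredients above and the endpoint cases $\alpha \in \{0,1\}$ recover the ordinary Cauchy--Schwarz bound as a sanity check.
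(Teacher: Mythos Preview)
Your argument is the standard Heinz--Kato/Furuta route and is correct: polar decomposition plus Cauchy--Schwarz gives
\(
|\langle Tx,y\rangle|^{2}\le \langle |T|^{2\alpha}x,x\rangle\,\langle U|T|^{2(1-\alpha)}U^{*}y,y\rangle,
\)
and the intertwining identity \(U f(|T|^{2})U^{*}=f(|T^{*}|^{2})\) for Borel \(f\) with \(f(0)=0\) finishes it. Your remark about the partial-isometry bookkeeping is on point; note only that at \(\alpha=1\) the function \(f(t)=t^{0}\) does not vanish at \(0\), so the operator identity \(UU^{*}=|T^{*}|^{0}\) can fail, but there one has \(UU^{*}\le I\), which still yields the stated inequality (and in any case that endpoint is bare Cauchy--Schwarz, as you observe).

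As for comparison with the paper: the paper does not prove this lemma at all. It is quoted verbatim from Furuta's 1986 note and used as a black box in the later theorems. So there is no ``paper's own proof'' to compare against; your write-up simply supplies the classical argument that the authors chose to cite rather than reproduce.
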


Now we are ready to prove the $q$-numerical radius inequalities.

\begin{theorem} \label{th2}
Let $T \in \mathcal{B(H)}$ and $q\in [0,1],$ then
\begin{equation} \label{eq3}
    \omega_q^2(T) \leq q^2 \omega^2(T) + ( 1 - q^2 + q\sqrt{1 - q^2}) \|T\|^2.
\end{equation}
\end{theorem}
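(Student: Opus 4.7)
\medskip

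\noindent\textbf{Proof plan.} The plan is to reduce the two-vector expression $|\langle Tx,y\rangle|$ to one that involves only $x$ and one extra orthonormal direction, and then to bound the resulting three pieces separately: a diagonal piece that sees $\omega(T)$, a Bessel-type piece that sees $\|T\|$, and a cross term that must be handled carefully in order to land the correct coefficient $q\sqrt{1-q^2}$ rather than $2q\sqrt{1-q^2}$.

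First I would fix $x,y\in\mathcal{H}$ with $\|x\|=\|y\|=1$ and $\langle x,y\rangle = q$. Since $q\in[0,1]$ is real, the vector $y-qx$ is orthogonal to $x$ and has norm $\sqrt{1-q^2}$; thus (for $q<1$) one may write $y = qx + \sqrt{1-q^2}\,z$ for some unit vector $z$ with $z\perp x$. Substituting and using sesquilinearity gives
\[
\langle Tx,y\rangle \;=\; q\langle Tx,x\rangle + \sqrt{1-q^2}\,\langle Tx,z\rangle,
\]
so that
\[
|\langle Tx,y\rangle|^2 \;=\; q^2|\langle Tx,x\rangle|^2 + (1-q^2)|\langle Tx,z\rangle|^2 + 2q\sqrt{1-q^2}\,\mathrm{Re}\bigl(\langle Tx,x\rangle\overline{\langle Tx,z\rangle}\bigr).
\]

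Next I would estimate the three terms. The first term is bounded by $q^2\omega^2(T)$ by definition of the numerical radius, and the second is trivially bounded by $(1-q^2)\|Tx\|^2\leq(1-q^2)\|T\|^2$. The cross term is the crux: bounding the real part by the product of moduli and then applying AM-GM gives
\[
2\,|\langle Tx,x\rangle||\langle Tx,z\rangle| \;\leq\; |\langle Tx,x\rangle|^2 + |\langle Tx,z\rangle|^2,
\]
and here I would invoke Bessel's inequality for the orthonormal set $\{x,z\}$, which yields
\[
|\langle Tx,x\rangle|^2 + |\langle Tx,z\rangle|^2 \;\leq\; \|Tx\|^2 \;\leq\; \|T\|^2.
\]
Consequently the cross term contributes at most $q\sqrt{1-q^2}\,\|T\|^2$. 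Collecting the three estimates gives $|\langle Tx,y\rangle|^2 \leq q^2\omega^2(T) + (1-q^2+q\sqrt{1-q^2})\|T\|^2$, and taking the supremum over admissible $x,y$ yields the claim. The case $q=1$ is immediate from the definition of $\omega(T)$.

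The main potential obstacle is getting the right constant on the cross term. The naive Cauchy--Schwarz bound $|\langle Tx,z\rangle|\leq\|T\|$ applied to the cross term would only produce the coefficient $2q\sqrt{1-q^2}$, which would then need to be absorbed against $\omega^2(T)\leq\|T\|^2$, giving a weaker inequality. Using AM-GM on the cross term together with Bessel's inequality (rather than the naive operator-norm bound on each factor) is exactly what trims the cross term from $2q\sqrt{1-q^2}\|T\|^2$ to $q\sqrt{1-q^2}\|T\|^2$, which is precisely the coefficient appearing in \eqref{eq3}.
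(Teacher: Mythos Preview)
Your proof is correct and follows essentially the same route as the paper: the decomposition $y=qx+\sqrt{1-q^2}\,z$, Bessel's inequality for the orthonormal pair $\{x,z\}$, and AM--GM on the cross term are exactly the ingredients used there. The only cosmetic difference is the order of operations---the paper first invokes Bessel to replace $|\langle Tx,z\rangle|$ by $(\|Tx\|^2-|\langle Tx,x\rangle|^2)^{1/2}$ and then applies AM--GM, whereas you apply AM--GM first and Bessel second, but the resulting bound on the cross term is identical.
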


\begin{proof}
For $ q=1$ the inequality holds trivially. Let $q \in [0,1)$ and $x, y \in \mathcal{H}$ such that $\|x\| = 1 = \|y\|$ with $\langle x,y \rangle = q.$ Then $y$ can be expressed as $y = q x + \sqrt{1 - q^2}z,$ where $\|z\| = 1$ and $\la x,z \ra = 0.$ In this setting we have
\begin{equation} \label{eq1}
  |\langle Tx, y \rangle | \leq q |\langle Tx, x \rangle| + \sqrt{1 - q^2}|\langle Tx, z \rangle |.  
\end{equation}
Let $\mathcal{E}$ be any orthonormal set in $\mathcal{H}$ containing $x$ and $z.$ From Bessel's Inequality it follows
\begin{eqnarray*}
&&\sum_{e \in \mathcal{E} \setminus \{x\}} |\la Tx, e \ra|^2 + |\la Tx, x \ra|^2 \leq \|Tx\|^2 \\
& \Rightarrow & |\Txz|^2 \leq \sum_{e \in \mathcal{E} \setminus \{x\}} |\la Tx, e \ra|^2 \leq \|Tx\|^2 - |\la Tx, x \ra|^2.
\end{eqnarray*}
Using the above relation in equation (\ref{eq1}), we get

  \begin{eqnarray} \label{array1}
     |\langle Tx, y \rangle | & \leq & q |\langle Tx, x \rangle| + \sqrt{1 - q^2} (\|Tx\|^2 - |\Tx|^2)^{\frac{1}{2}}\\
    \Rightarrow  |\Txy|^2  & \leq & \left( q |\langle Tx, x \rangle| + \sqrt{1 - q^2} (\|Tx\|^2 - |\Tx|^2)^{\frac{1}{2}}  \right)^2 \nonumber\\
     & = & q^2 |\Tx|^2 + 2q \q |\Tx| (\|Tx\|^2 - |\Tx|^2)^{\frac{1}{2}} \nonumber \\
     && + (1 - q^2) (\|Tx\|^2 - |\Tx|^2) \nonumber \\
     & \leq & q^2 |\Tx|^2 + 2q \q |\Tx| (\|Tx\|^2 - |\Tx|^2)^{\frac{1}{2}} \nonumber \\
     && + (1 - q^2) \|Tx\|^2 \nonumber \\
     & \leq & q^2 |\Tx|^2 + q \q (|\Tx|^2 + \|Tx\|^2 - |\Tx|^2 ) \nonumber \\
     && + (1 - q^2) \|Tx\|^2 \nonumber \\
     & = & q^2 |\Tx|^2 + (1 - q^2 + q \q) \|Tx\|^2 \nonumber \\
     & \leq & q^2 \omega^2(T) + ( 1 - q^2 + q\sqrt{1 - q^2}) \|T\|^2. \nonumber
  \end{eqnarray}
  Taking supremum for all $x,y \in \mathcal{H}$ with $\|x\| = \|y\| = 1$ and $\la x,y \ra = q$ we get 
  \[
    \omega_q^2(T) \leq q^2 \omega^2(T) + ( 1 - q^2 + q\sqrt{1 - q^2}) \|T\|^2.
\]
\end{proof}

Now we mention a few observations and derive several corollaries based on the above Theorem. 

\begin{enumerate}


\item From the result $\omega(T) \leq \frac{1}{2} (\|T\| + \|T^2\|^\frac{1}{2}),$ mentioned in Theorem 1 of \cite{kittaneh2003numerical}, we have the following corollary from Theorem \ref{th2}.

\begin{corollary} \label{cor2}
    For $T \in \mathcal{B(H)}$ and $q \in [0,1]$ we have
\begin{equation} \label{eq5}
    \omega_q^2(T) \leq \frac{q^2}{4} (\|T\| + \|T^2\|^\frac{1}{2})^2 + ( 1 - q^2 + q\sqrt{1 - q^2}) \|T\|^2.
\end{equation}
\end{corollary}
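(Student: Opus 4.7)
The plan is essentially a one-line substitution into Theorem~\ref{th2}. Having already established
\[
\omega_q^2(T) \leq q^2 \omega^2(T) + (1 - q^2 + q\sqrt{1 - q^2}) \|T\|^2,
\]
the only remaining task is to bound $\omega^2(T)$ by a quantity involving $\|T\|$ and $\|T^2\|^{1/2}$. Kittaneh's inequality (the first half of (\ref{int2})), recalled just before the corollary statement, supplies exactly such a bound, namely $\omega(T) \leq \tfrac{1}{2}(\|T\| + \|T^2\|^{1/2})$.

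Concretely, I would first square Kittaneh's inequality to obtain
\[
\omega^2(T) \leq \frac{1}{4}\bigl(\|T\| + \|T^2\|^{1/2}\bigr)^2,
\]
which is legitimate since both sides are nonnegative. I would then multiply through by $q^2 \geq 0$ and insert the result into the right-hand side of the inequality furnished by Theorem~\ref{th2}. This immediately yields
\[
\omega_q^2(T) \leq \frac{q^2}{4}\bigl(\|T\| + \|T^2\|^{1/2}\bigr)^2 + (1 - q^2 + q\sqrt{1 - q^2}) \|T\|^2,
\]
which is the asserted inequality.

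There is no genuine obstacle here; the proof is a direct application of a known numerical radius bound to the newly established $q$-numerical radius bound. The only small points to verify are that the monotonicity step (squaring Kittaneh's inequality and multiplying by $q^2$) is valid in the whole range $q \in [0,1]$, and that the boundary cases $q = 0$ and $q = 1$ are consistent — at $q = 1$ the inequality collapses to Kittaneh's original squared bound for $\omega^2(T)$, while at $q = 0$ it reduces to $\omega_0^2(T) \leq \|T\|^2$, both of which are immediate sanity checks rather than independent arguments.
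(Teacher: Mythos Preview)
Your proposal is correct and matches the paper's own justification exactly: the corollary is obtained by inserting Kittaneh's bound $\omega(T) \le \tfrac{1}{2}(\|T\| + \|T^2\|^{1/2})$ into Theorem~\ref{th2}. There is nothing to add.
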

Using the fact $\| |T| \| = \| T \|,$ we have the following relations
\begin{eqnarray*}
  \omega_q^2(T) & \leq & \frac{q^2}{4} (\|T\| + \|T^2\|^\frac{1}{2})^2 + ( 1 - q^2 + q\sqrt{1 - q^2}) \||T|\|^2 \\
   & \leq &  \frac{q^2}{4} (\|T\| + \|T^2\|^\frac{1}{2})^2 + ( 1 - q^2 + 2q\sqrt{1 - q^2}) \|T\|^2. 
\end{eqnarray*}
This shows that the inequality mentioned in (\ref{eq5}) provides an improvement on the result (\ref{oth2}), proved in Theorem 2.10 in \cite{fakhri2022q}.\\



\item From the relation $\omega^2(T) \leq \frac{1}{2} \|T^*T+TT^*\|$ as obtained in Theorem 1 in \cite{kittaneh2005numerical} we have the following corollary follows from the Theorem \ref{th2}.

\begin{corollary} \label{cor1}
    For $T \in \mathcal{B(H)}$ and $q \in [0,1],$ the following relation holds
   \begin{eqnarray} \label{eq7}
       \omega_q^2(T) \leq \frac{q^2}{2} \|T^*T+TT^*\| + (1 - q^2 + q \q) \|T\|^2. 
    \end{eqnarray} 
\end{corollary}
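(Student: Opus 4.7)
The plan is essentially a one-line substitution argument, since Corollary \ref{cor1} is advertised as a consequence of Theorem \ref{th2} combined with Kittaneh's bound from \cite{kittaneh2005numerical}. I would begin by invoking Theorem \ref{th2} directly, which gives
\[
\omega_q^2(T) \le q^2 \omega^2(T) + (1 - q^2 + q\sqrt{1-q^2})\|T\|^2,
\]
valid for all $T \in \mathcal{B(H)}$ and $q \in [0,1]$.

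Next I would recall the inequality $\omega^2(T) \le \tfrac{1}{2}\|T^*T + TT^*\|$, proved by Kittaneh in \cite{kittaneh2005numerical} and already quoted in (\ref{int3}) of the introduction. Multiplying both sides by $q^2$ gives $q^2 \omega^2(T) \le \tfrac{q^2}{2}\|T^*T + TT^*\|$, which I substitute into the first term on the right-hand side of the bound from Theorem \ref{th2}. The coefficient of $\|T\|^2$ is untouched, so the result drops out immediately as
\[
\omega_q^2(T) \le \frac{q^2}{2}\|T^*T + TT^*\| + (1 - q^2 + q\sqrt{1-q^2})\|T\|^2.
\]

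Since both ingredients are quoted results, there is no technical obstacle: the only thing to check is that the multiplier $q^2 \ge 0$, so the direction of the inequality is preserved when substituting the upper bound for $\omega^2(T)$. No supremum needs to be retaken, and no choice of auxiliary vectors or parameter $\alpha$ is required. The whole argument fits in a couple of display lines, matching the ``follows from'' phrasing the authors use to introduce the corollary.
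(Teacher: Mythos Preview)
Your argument is correct and matches the paper's own derivation exactly: the authors obtain Corollary \ref{cor1} by inserting Kittaneh's bound $\omega^2(T)\le \tfrac12\|T^*T+TT^*\|$ into the $q^2\omega^2(T)$ term of Theorem \ref{th2}. Nothing further is needed.
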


Here we prove the refinement of the above result in comparison to the existing upper bound (\ref{oth3}) of $\omega^2(T)$ mentioned in Theorem 3.1 in \cite{fakhri2022q}. For this, we use the fact that 
\[\|T^*T + TT^*\| \geq \|T\|^2.\]
From Corollary \ref{cor1} it follows
\begin{eqnarray*}
\omega_q^2(T) & \leq & \frac{q^2}{2} \|T^*T+TT^*\| + (1 - q^2 + q \q) \|T\|^2 \\
& \leq & \left( 1 - \frac{q^2}{2} + q \q) \right) \|T^*T+TT^*\|\\
& \leq &  \frac{q^2}{2(1- \q)^2}   \|T^*T+TT^*\|.
\end{eqnarray*}
The last inequality follows from the fact that
\begin{eqnarray*}
  &&\frac{q^2}{2(1- \q)^2} -  \left( 1 - \frac{q^2}{2} + q \q) \right) \\ 
  & = & \frac{(1 - q^2)(2 - q^2) + 2 \q (1 - q^3)}{2q^2} \geq 0.
\end{eqnarray*}
\end{enumerate}
\begin{remark}
 It is crucial to note that the upper bound given in equation (\ref{oth3}) goes unbounded when $q$ approaches zero. However, the upper bound stated in Corollary \ref{cor1} is applicable to all $q\in [0,1]$.    
\end{remark}

Now we provide a few examples to demonstrate our results.
For this the following lemma is crucial.
\begin{lemma}\label{lemma4}(\cite{nakazato1994c})
    Suppose $0\leq q \leq 1$ and $T\in M_{2}(\mathbb{C})$. Then $T$ is unitarily similar to $e^{it}\begin{pmatrix}\gamma & a \\ b & \gamma \end{pmatrix}$ for some $0\leq t \leq 2\pi$ and $0\leq b \leq a$. Also,
    \begin{eqnarray*}
        W_q(T)=e^{it}\{\gamma q + r((c+pd)\cos{s}+i(d+pc)\sin{s}): 0 \leq r \leq 1, 0 \leq s \leq 2\pi\},
    \end{eqnarray*}
    with $c=\frac{a+b}{2},d=\frac{a-b}{2}$ and $p=\sqrt{1-q^2}$.
\end{lemma}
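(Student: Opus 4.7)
My plan is to establish the lemma by reducing to a canonical form via the invariance properties in Lemma 2.1, and then carrying out a direct parameterization of $\la Tx, y\ra$. First, every $T \in M_2(\mathbb{C})$ admits a Schur-type factorization: after a unitary similarity and after absorbing the phases of the off-diagonal entries into a scalar $e^{it}$, it becomes $e^{it}\begin{pmatrix}\gamma & a \\ b & \gamma\end{pmatrix}$ with $0 \leq b \leq a$. Using parts (iii) and (iv) of Lemma 2.1, together with the scalar-homogeneity $W_q(\lambda T)=\lambda W_q(T)$ (which is the $b=0$ case of (iv)), it suffices to compute $W_q(T_0)$ for $T_0 = \begin{pmatrix}0 & a \\ b & 0\end{pmatrix}$; the general answer is then recovered by translating by $\gamma q$ and multiplying by $e^{it}$.

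Next, I would parameterize the constraint set. Any unit vector $x\in\mathbb{C}^2$ can be written $x=(e^{i\alpha}\cos\theta,\ e^{i\beta}\sin\theta)^T$, and every unit vector $y$ with $\la x,y\ra=q$ has the form $y=qx+p\,e^{i\phi}z_0$, where $z_0=(-e^{-i\beta}\sin\theta,\ e^{-i\alpha}\cos\theta)^T$ is the essentially unique unit vector orthogonal to $x$ and $\phi$ is a free phase. Splitting $\la T_0 x, y\ra=q\la T_0 x, x\ra+p\,e^{-i\phi}\la T_0 x, z_0\ra$ and computing each piece in these coordinates yields, after routine simplification using $c=(a+b)/2$ and $d=(a-b)/2$,
\[
\la T_0 x, y\ra = q\sin(2\theta)\bigl(c\cos\psi+id\sin\psi\bigr)+ p\,e^{-i\phi}\bigl(b\,e^{2i\alpha}\cos^2\theta-a\,e^{2i\beta}\sin^2\theta\bigr),
\]
where $\psi=\beta-\alpha$.

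The main content is then to show that as $\theta\in[0,\pi/2]$ and $\alpha,\beta,\phi\in[0,2\pi)$ vary, this expression sweeps out exactly the filled ellipse $\{r((c+pd)\cos s+i(d+pc)\sin s):0\leq r\leq 1,\,0\leq s\leq 2\pi\}$. I would first check boundary realization: for each fixed $s$ one aligns the phases $\alpha,\beta,\phi$ coherently so that both summands point in the same direction, and then optimizes over $\theta$; the key trigonometric identity is that the maximum of the first term in direction $s$ is $c\cos s+id\sin s$ (attained at $2\theta=\pi/2$, $\psi=s$) while the corresponding optimum of the second term contributes $p(d\cos s+ic\sin s)$, summing to the claimed extremal point. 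Containment in the ellipse follows by bounding each term by its maximum modulus in the given direction. Filling in the interior then uses the convexity of $W_q(T_0)$ (Tsing's theorem, mentioned in the introduction) together with $0\in W_q(T_0)$ (take $x=(1,0)^T$), so that the convex hull of the boundary curve is contained in $W_q(T_0)$.

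The principal obstacle is the boundary realization step, since four real parameters interact and the two summands share the angles $\alpha,\beta$. The trick is to re-parameterize via $\xi=2\alpha-\phi$ and $\eta=2\beta-\phi$, which decouples the phase $\phi$ from the first summand (it depends only on $\psi=\beta-\alpha=(\eta-\xi)/2$) and lets me choose $\phi$ freely to rotate the second summand into alignment with the first. Once this decoupling is in place, the identification with the ellipse reduces to a one-variable trigonometric optimization.
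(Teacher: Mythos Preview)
The paper does not prove this lemma; it is quoted from Nakazato's paper and used only as a tool in the examples. So there is no in--paper argument to compare against, and your proposal has to stand on its own.

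Your reduction to the traceless off--diagonal model $T_0=\begin{pmatrix}0&a\\ b&0\end{pmatrix}$ via Lemma~2.1 is sound, and your parameterization
\[
\la T_0x,y\ra \;=\; q\sin(2\theta)\bigl(c\cos\psi+id\sin\psi\bigr)\;+\;p\,e^{-i\phi}\bigl(b\,e^{2i\alpha}\cos^2\theta-a\,e^{2i\beta}\sin^2\theta\bigr)
\]
is correct. The failure is in the ``boundary realization'' step. You fix $2\theta=\pi/2$, $\psi=s$, optimize the free phase, and claim the two summands add to $(c+pd)\cos s+i(d+pc)\sin s$. But at $\theta=\pi/4$ the first summand is $q(c\cos s+id\sin s)$, not $(c\cos s+id\sin s)$; adding your second contribution $p(d\cos s+ic\sin s)$ gives $(qc+pd)\cos s+i(qd+pc)\sin s$, which is \emph{strictly inside} the asserted ellipse for every $q<1$. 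Concretely, take $b=0$ (so $c=d=a/2$). The lemma predicts the disc of radius $\tfrac{a}{2}(1+p)$, consistent with Example~2.10. Your recipe at $\theta=\pi/4$ gives
\[
\la T_0x,y\ra=\tfrac{a}{2}\bigl(qe^{i(\beta-\alpha)}-pe^{i(2\beta-\phi)}\bigr),
\]
whose maximum modulus over all phases is $\tfrac{a}{2}(q+p)<\tfrac{a}{2}(1+p)$. The true extremum is attained at a $\theta$ depending on $q$ (in this case where $q\sin 2\theta-p\cos 2\theta$ is maximal), so the two summands cannot be optimized independently at $\theta=\pi/4$; they genuinely compete through $\theta$, and your decoupling trick via $\xi,\eta,\phi$ handles only the phase variables, not $\theta$.

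The containment direction (``bounding each term by its maximum modulus in the given direction'') is also unjustified as written: the first summand is \emph{not} free to point in direction $s$ unless $\psi$ is chosen accordingly, and once $\psi$ is chosen, the modulus of the second summand is constrained. A correct argument requires a joint optimization in $(\theta,\psi)$ for each supporting direction, which is exactly the computation carried out in Nakazato's original paper; your sketch does not supply it.
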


\begin{example}\label{example1}
Consider the matrix $T=\begin{pmatrix}0 & \frac{1}{35}\\ 0 & 0\end{pmatrix}$. Then $\|T\|=\frac{1}{35}$. From Lemma \ref{lemma4}, 
\begin{equation*}
    W_q(T)=\left\{\frac{re^{is}}{70}(1+\sqrt{1-q^2}:0\leq r \leq 1,0\leq s \leq 2\pi)\right\}.
\end{equation*}
Therefore, $\omega_q(T)=\frac{1}{70}(1+\sqrt{1-q^2})$. In this example, we will compare the upper bounds (\ref{oth2}) and (\ref{oth3}) of $\omega_q(T)$ obtained in \cite{fakhri2022q} with our results Corollary \ref{cor2} and Corollary \ref{cor1}. 
Since $\|T^2\|=0$, from equations (\ref{oth2}) and (\ref{eq5}) we repectively get 
\begin{eqnarray}\label{eq17}
    \omega_q(T) \leq \frac{1}{35}\sqrt{1-\frac{3q^2}{4}+2q\sqrt{1-q^2}}
\end{eqnarray} and

\begin{eqnarray}\label{eq18}
    \omega_q(T) \leq \frac{1}{35}\sqrt{1-\frac{3q^2}{4}+q\sqrt{1-q^2}}.
\end{eqnarray}
Figure \ref{fig1} presents the graphical representation of upper bounds (\ref{eq17}), (\ref{eq18}) and $\omega_q(T)$. 

\begin{figure}[h]
\caption{Comparision of $\omega_q(T)$ with upper bounds (\ref{eq17}) and (\ref{eq18}) for Example \ref{example1}}
\centering
\includegraphics[scale=.90]{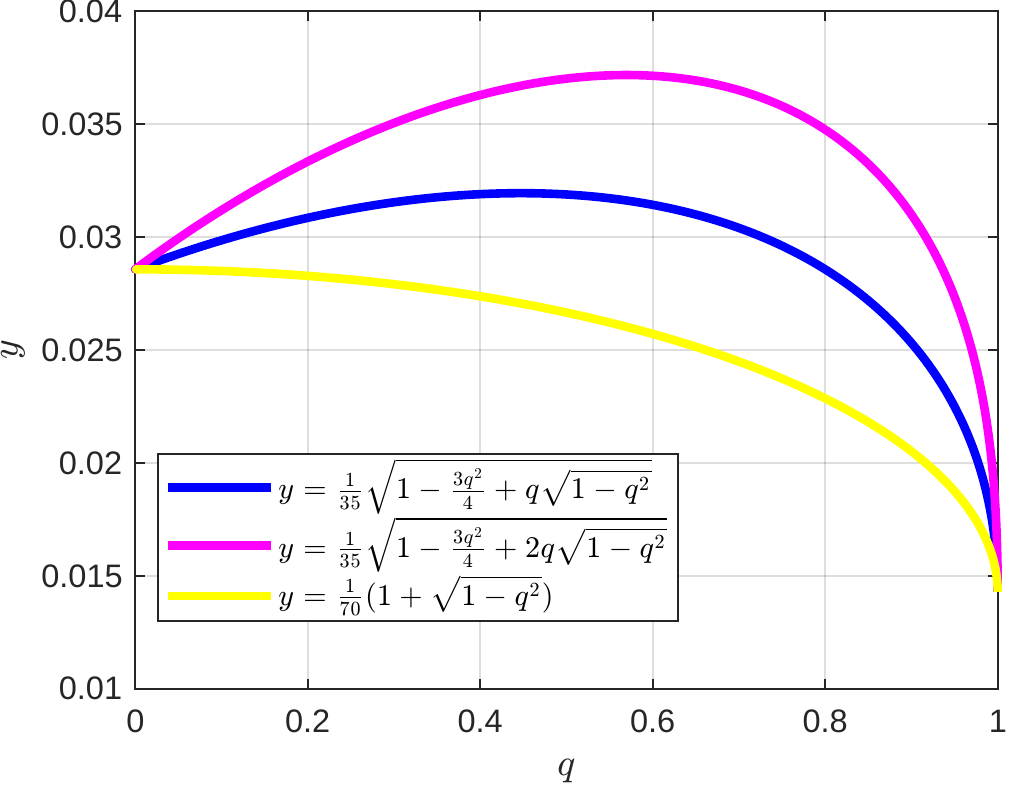}
\label{fig1}
\end{figure}

Again using $\|T^*T+TT^*\|=\frac{1}{35^2}$, from eqations (\ref{oth3}) and (\ref{eq7}) we get the upper bounds 

\begin{equation}\label{eq19}
    \omega_q(T)\leq \frac{q}{35\sqrt{2}(1-\sqrt{1-q^2})}
\end{equation}
and
\begin{equation}\label{eq20}
    \omega_q(T)\leq \frac{1}{35}\sqrt{1-\frac{q^2}{2}+q\sqrt{1-q^2}}.
\end{equation}

Figure \ref{fig2} presents the graphical representation of upper bounds (\ref{eq19}), (\ref{eq20}) and $\omega_q(T)$.

\begin{figure}[h]
\caption{Comparision of $\omega_q(T)$ with upper bounds (\ref{eq19}) and (\ref{eq20}) for Example \ref{example1}}
\centering
\includegraphics[scale=.90]{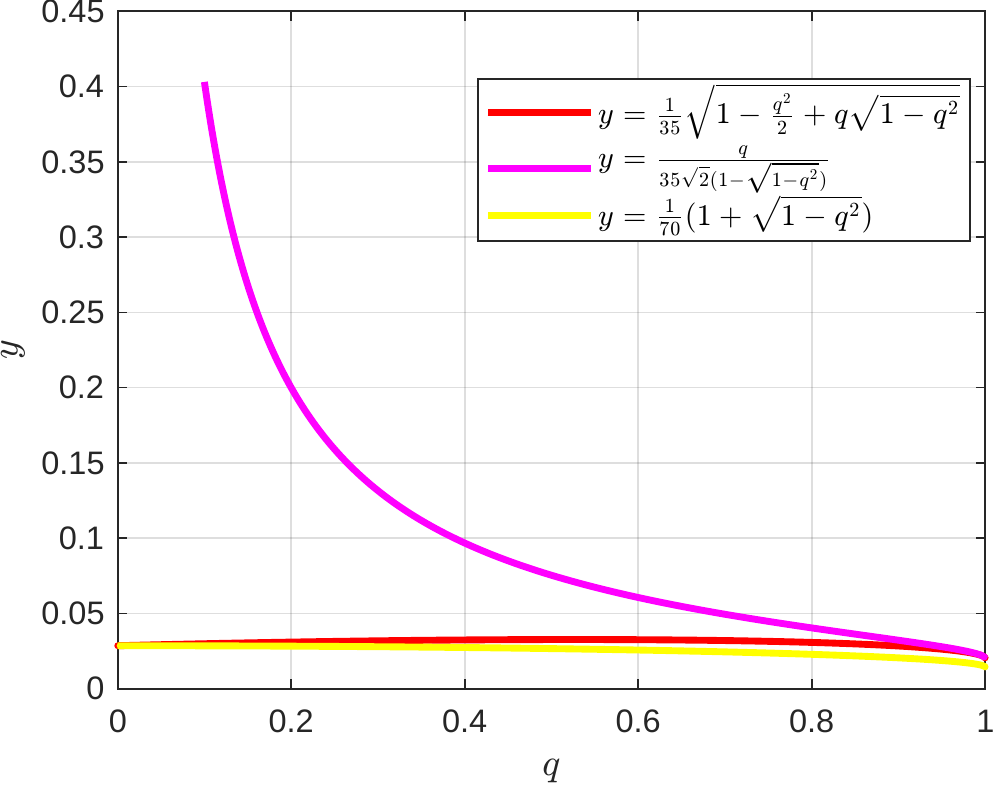}
\label{fig2}
\end{figure}

It is evident from Figure \ref{fig1} and \ref{fig2} that upper bounds obtained using Corollary \ref{cor2} and Corollary \ref{cor1} are more refined than those of \cite{fakhri2022q}. Figure \ref{fig3} shows the comparison between the obtained results (\ref{eq18}) and (\ref{eq20}). In this example, the upper bound obtained using Corollary \ref{cor2} gives a better result.

\begin{figure}[h]
\caption{Comparision of upper bounds (\ref{eq18}) and (\ref{eq20}) of $\omega_q(T)$ for Example \ref{example1}}
\centering
\includegraphics[scale=.90]{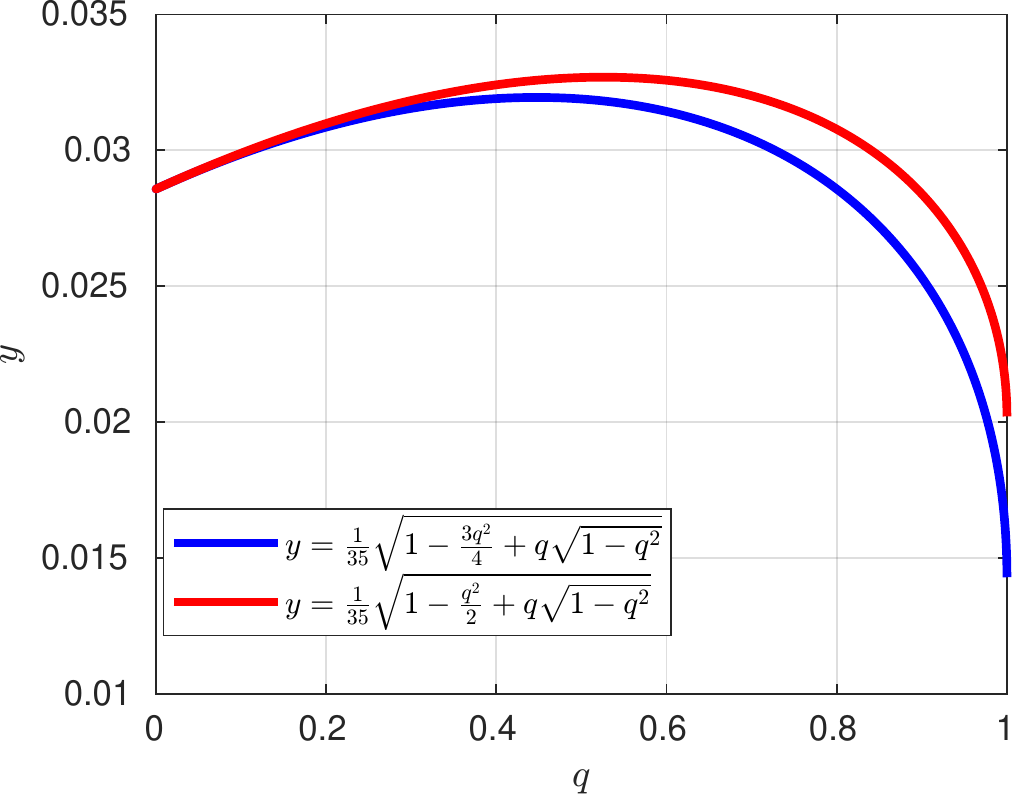}
\label{fig3}
\end{figure}

\end{example}

\begin{example}\label{example2}
Consider another matrix $T=\begin{pmatrix}0 & \frac{1}{25}\\ \frac{1}{36} & 0\end{pmatrix}$. Then $\|T\|=\frac{1}{25}$.
From Lemma \ref{lemma4}, 
\begin{equation*}
    W_q(T)=\left\{\frac{r}{1800}\left((61+\sqrt{1-q^2}11)+i(11+\sqrt{1-q^2}61)\right):0\leq r \leq 1,0\leq s \leq 2\pi)\right\}.
\end{equation*}
Therefore, $\omega_q(T)=\frac{1}{1800}(61+\sqrt{1-q^2}11)$. In this example also we will compare the upper bounds (\ref{oth2}) and (\ref{oth3}) of $\omega_q(T)$ obtained in \cite{fakhri2022q} with our results Corollary \ref{cor2} and Corollary \ref{cor1}. 
Since $\|T^2\|=\frac{1}{900}$, from equations (\ref{oth2}) and (\ref{eq5}) we repectively get 

\begin{eqnarray}\label{eq21} 
    \omega_q(T) \leq \frac{1}{25}\sqrt{1-\frac{23q^2}{144}+2q\sqrt{1-q^2}}
\end{eqnarray} and

\begin{eqnarray}\label{eq22}
    \omega_q(T) \leq \frac{1}{25}\sqrt{1-\frac{23q^2}{144}+q\sqrt{1-q^2}}.
\end{eqnarray}
Figure \ref{fig4} presents the graphical representation of upper bounds (\ref{eq21}), (\ref{eq22}) and $\omega_q(T)$. 

\begin{figure}[h]
\caption{Comparision of $\omega_q(T)$ with upper bounds (\ref{eq21}) and (\ref{eq22}) for Example \ref{example2}}
\centering
\includegraphics[scale=.90]{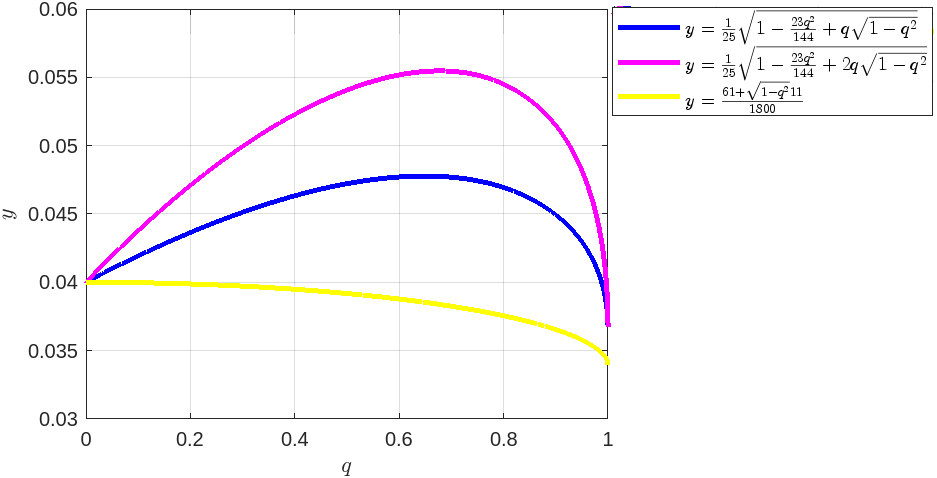}
\label{fig4}
\end{figure}

Now using $\|T^*T+TT^*\|=\frac{1921}{25^2 \times 36^2}$, from eqations (\ref{oth3}) and (\ref{eq7}) we get the upper bounds 

\begin{equation}\label{eq23}
    \omega_q(T)\leq \sqrt{\frac{1921}{2}}\frac{q}{900(1-\sqrt{1-q^2})}
\end{equation}
and
\begin{equation}\label{eq24}
    \omega_q(T)\leq \frac{1}{25}\sqrt{1-\frac{671q^2}{2592}+q\sqrt{1-q^2}}.
\end{equation}

Figure \ref{fig5} presents the graphical representation of upper bounds (\ref{eq23}), (\ref{eq24}) and $\omega_q(T)$.

\begin{figure}[h]
\caption{Comparision of $\omega_q(T)$ with upper bounds (\ref{eq23}) and (\ref{eq24}) for Example \ref{example2}}
\centering
\includegraphics[scale=1]{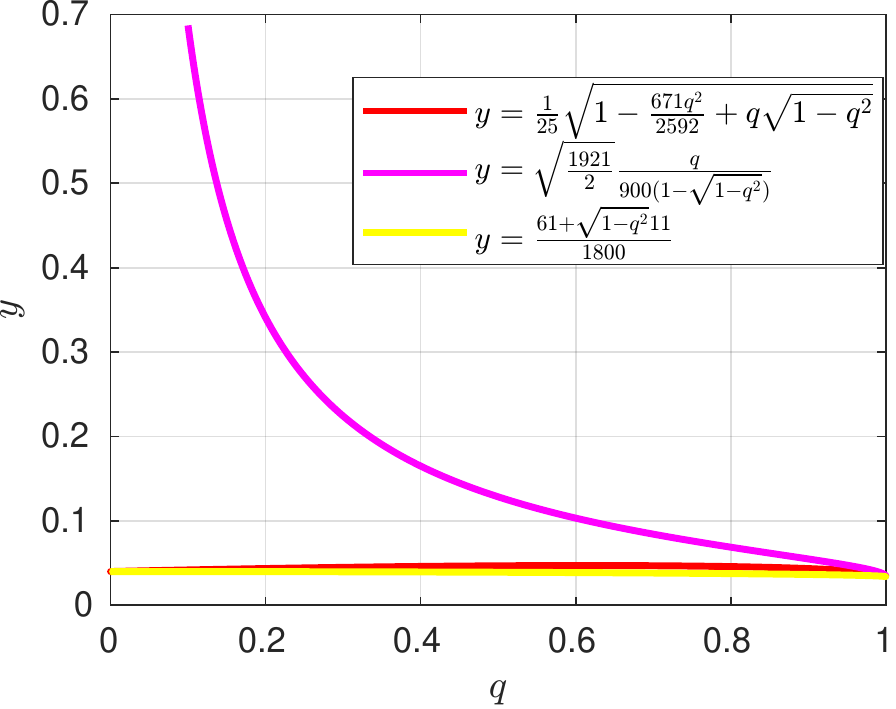}
\label{fig5}
\end{figure}

Again, it is clear from Figure \ref{fig4} and \ref{fig5} that upper bounds obtained using Corollary \ref{cor2} and Corollary \ref{cor1} are more refined than those of \cite{fakhri2022q}. Figure \ref{fig6} shows the comparison of our results (\ref{eq22}) and (\ref{eq24}). Contrary to Example \ref{example1}, in this example, the upper bound obtained using Corollary \ref{cor1} gives a better result than that of Corollary \ref{cor2}.

\begin{figure}[h]
\caption{Comparision of upper bounds (\ref{eq22}) and (\ref{eq24}) of $\omega_q(T)$ for Example \ref{example2}}
\centering
\includegraphics[scale=.95]{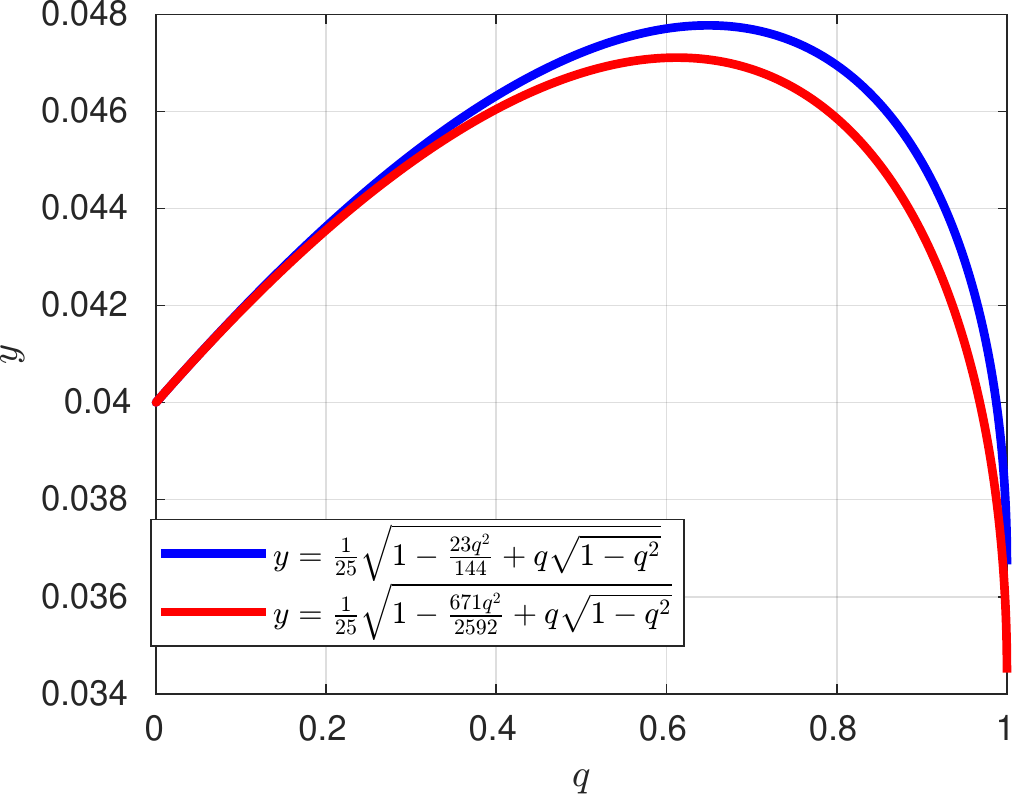}
\label{fig6}
\end{figure}

\end{example}

\begin{remark}
   Figure \ref{fig3} and Figure \ref{fig6} show that the upper bounds obtained in Corollary \ref{cor2} and  \ref{cor1} are noncomparable.
\end{remark}

Now we derive another upper bound of the $q$-numerical radius which refines the Theorem \ref{th2} and also the corresponding corollaries \ref{cor2} and \ref{cor1}.

\begin{theorem} \label{th5}
    Let $T \in \mathcal{B(H)}$ and $q\in [0,1],$ then
\begin{equation} \label{eq9}
    \omega_q^2(T) \leq q^2 \omega^2(T) + ( 1 - q^2 + q\sqrt{1 - q^2}) \|T\|^2 - (1 - q^2) c(T).
\end{equation}
\end{theorem}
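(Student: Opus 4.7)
The plan is to revisit the computation in (\ref{array1}) from the proof of Theorem \ref{th2} and avoid the one step that throws away a nonpositive term. Concretely, in that chain one upper-estimates $(1 - q^2)(\|Tx\|^2 - |\langle Tx,x\rangle|^2)$ by $(1 - q^2)\|Tx\|^2$, discarding the term $-(1 - q^2)|\langle Tx,x\rangle|^2$. Keeping this term should produce exactly the Crawford-type correction asserted in (\ref{eq9}).

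So I would start as in Theorem \ref{th2}: for $q \in [0,1)$ take unit vectors $x, y$ with $\langle x, y \rangle = q$, write $y = qx + \sqrt{1 - q^2}\, z$ with $\|z\| = 1$ and $\langle x, z\rangle = 0$, and apply Bessel's inequality to an orthonormal set containing $x$ and $z$ to get $|\langle Tx, z\rangle|^2 \leq \|Tx\|^2 - |\langle Tx, x\rangle|^2$. Squaring the triangle-type bound $|\langle Tx, y\rangle| \leq q|\langle Tx, x\rangle| + \sqrt{1-q^2}\,(\|Tx\|^2 - |\langle Tx, x\rangle|^2)^{1/2}$ yields
\begin{align*}
|\langle Tx, y\rangle|^2 &\leq q^2 |\langle Tx, x\rangle|^2 + 2q\sqrt{1-q^2}\,|\langle Tx, x\rangle|\bigl(\|Tx\|^2 - |\langle Tx, x\rangle|^2\bigr)^{1/2} \\
&\qquad + (1-q^2)\bigl(\|Tx\|^2 - |\langle Tx, x\rangle|^2\bigr).
\end{align*}

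Next I would apply the inequality $2AB \leq A^2 + B^2$ to the middle term exactly as before, obtaining
\[
2|\langle Tx, x\rangle|\bigl(\|Tx\|^2 - |\langle Tx, x\rangle|^2\bigr)^{1/2} \leq \|Tx\|^2,
\]
but crucially I would \emph{not} enlarge the last bracket. Combining the two surviving $|\langle Tx, x\rangle|^2$ contributions and rewriting $(2q^2 - 1)|\langle Tx,x\rangle|^2 = q^2|\langle Tx,x\rangle|^2 - (1 - q^2)|\langle Tx,x\rangle|^2$ gives
\[
|\langle Tx, y\rangle|^2 \leq q^2 |\langle Tx, x\rangle|^2 + \bigl(1 - q^2 + q\sqrt{1-q^2}\bigr)\|Tx\|^2 - (1 - q^2)|\langle Tx, x\rangle|^2.
\]
Finally I would take the supremum over admissible $x, y$, bounding $|\langle Tx, x\rangle|^2$ above by $\omega^2(T)$, $\|Tx\|^2$ above by $\|T\|^2$, and $|\langle Tx, x\rangle|^2$ below by $c(T)^2$ in the subtracted term. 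The case $q = 1$ is trivial since then the last term vanishes and the bound reduces to (\ref{int1}).

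There isn't really a substantive obstacle — the refinement is essentially bookkeeping — but the one delicate point is the handling of the subtracted term under the supremum. Because the three expressions $|\langle Tx,x\rangle|^2$, $\|Tx\|^2$, and $|\langle Tx,x\rangle|^2$ are all evaluated at the \emph{same} vector $x$, splitting the supremum is only legitimate as an upper estimate (sup of a sum is bounded by sum of sups, and $-|\langle Tx,x\rangle|^2 \leq -c(T)^2$ uniformly in $x$). I would check carefully that the subtracted Crawford term in the final statement is $c(T)^2$ rather than $c(T)$ as written, since the natural derivation produces the square; this is the only subtle ambiguity I would want to resolve with the authors before finalizing.
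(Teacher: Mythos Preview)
Your proposal is correct and follows essentially the same approach as the paper's proof: revisit the chain in (\ref{array1}), apply $2AB \le A^2+B^2$ to the cross term, but retain the $-(1-q^2)|\langle Tx,x\rangle|^2$ contribution before passing to the supremum. Your observation about $c(T)$ versus $c(T)^2$ is also on point: the paper's own proof ends with $-(1-q^2)c^2(T)$, and the subsequent Corollaries \ref{cor3} and \ref{cor4} likewise use $c^2(T)$, so the $c(T)$ in the stated inequality (\ref{eq9}) is evidently a typo.
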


\begin{proof}
  For $q=1$ the inequality holds trivially. Let $q \in [0,1).$ In accordance with the proof of Theorem \ref{th2}, it follows from the relation (\ref{array1})
    \begin{eqnarray*}
     |\langle Tx, y \rangle | & \leq & q |\langle Tx, x \rangle| + \sqrt{1 - q^2} (\|Tx\|^2 - |\Tx|^2)^{\frac{1}{2}}\\
    \Rightarrow  |\Txy|^2  & \leq & \left( q |\langle Tx, x \rangle| + \sqrt{1 - q^2} (\|Tx\|^2 - |\Tx|^2)^{\frac{1}{2}}  \right)^2 \nonumber\\
     & = & q^2 |\Tx|^2 + 2q \q |\Tx| (\|Tx\|^2 - |\Tx|^2)^{\frac{1}{2}} \nonumber \\
     && + (1 - q^2) (\|Tx\|^2 - |\Tx|^2) \nonumber \\
     & \leq & q^2 |\Tx|^2 + (1 - q^2 + q \q) \|Tx\|^2 - (1 - q^2) |\Tx|^2 \nonumber \\
     & \leq & q^2 \omega^2(T) + ( 1 - q^2 + q\sqrt{1 - q^2}) \|T\|^2 - (1 - q^2) c^2(T).
  \end{eqnarray*}
  Hence the required relation follows.
\end{proof}

\begin{remark}
Here we mention a few important observations related to the above theorem.
\begin{enumerate}
    \item[(i)] Clearly the relation (\ref{eq9}) improves the relation (\ref{eq3}) of Theorem \ref{th2} when $c(T)>0$. If $c(T) = 0,$ then the the upper bound in (\ref{eq9}) reduces to the upper bound of (\ref{eq3}).

    \item[(ii)] In this regard it is worth mentioning that $c(T) > 0$ if and only if $0 \notin \overline{W(T)}.$
\end{enumerate}
\end{remark}

The Corollary \ref{cor2} and Corollary \ref{cor1} can be improved by using the Theorem \ref{th5} as follows.

\begin{corollary} \label{cor3}
    For $T \in \mathcal{B(H)}$ and $q \in [0,1]$ we have
\begin{equation*} 
    \omega_q^2(T) \leq \frac{q^2}{4} (\|T\| + \|T^2\|^\frac{1}{2})^2 + ( 1 - q^2 + q\sqrt{1 - q^2}) \|T\|^2 - (1 - q^2) c^2(T).
\end{equation*}
\end{corollary}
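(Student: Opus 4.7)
The plan is to combine Theorem \ref{th5} directly with Kittaneh's classical numerical radius bound, which has already been invoked earlier in the paper (in the derivation of Corollary \ref{cor2}). Theorem \ref{th5} gives
$$\omega_q^2(T) \leq q^2 \omega^2(T) + (1 - q^2 + q\sqrt{1-q^2})\|T\|^2 - (1-q^2) c^2(T),$$
so the only step that remains is to replace $\omega^2(T)$ on the right-hand side by an expression involving $\|T\|$ and $\|T^2\|^{1/2}$.

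For this, I would cite Kittaneh's inequality from \cite{kittaneh2003numerical}, namely $\omega(T) \leq \tfrac{1}{2}(\|T\| + \|T^2\|^{1/2})$, which is exactly the ingredient used to pass from Theorem \ref{th2} to Corollary \ref{cor2}. Squaring and multiplying by the nonnegative constant $q^2$ yields
$$q^2 \omega^2(T) \leq \tfrac{q^2}{4}\bigl(\|T\| + \|T^2\|^{1/2}\bigr)^2.$$
Substituting this into the inequality of Theorem \ref{th5} preserves the direction (since all other terms on the right of Theorem \ref{th5} are independent of $\omega(T)$, and $q^2 \geq 0$), and produces exactly the bound claimed in Corollary \ref{cor3}.

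There is no real obstacle in this argument: it is a purely mechanical substitution of one already-established estimate into another. The only thing worth emphasising is that the subtracted term $-(1-q^2) c^2(T)$ from Theorem \ref{th5} carries through unchanged, which is precisely what makes Corollary \ref{cor3} a genuine refinement of Corollary \ref{cor2} whenever $c(T) > 0$, i.e.\ whenever $0 \notin \overline{W(T)}$.
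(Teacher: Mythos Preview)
Your proposal is correct and follows exactly the approach the paper intends: apply Kittaneh's bound $\omega(T) \leq \tfrac{1}{2}(\|T\| + \|T^2\|^{1/2})$ inside the inequality of Theorem~\ref{th5}, just as Corollary~\ref{cor2} was obtained from Theorem~\ref{th2}. There is nothing to add.
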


\begin{corollary} \label{cor4}
    For $T \in \mathcal{B(H)}$ and $q \in [0,1],$ the following relation holds
   \begin{eqnarray*}
       \omega_q^2(T) \leq \frac{q^2}{2} \|T^*T+TT^*\| + (1 - q^2 + q \q) \|T\|^2 - (1 - q^2) c^2(T). 
    \end{eqnarray*} 
\end{corollary}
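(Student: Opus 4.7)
The plan is to deduce Corollary \ref{cor4} from Theorem \ref{th5} by a single substitution, in complete analogy with the way Corollary \ref{cor1} was obtained from Theorem \ref{th2} in the discussion earlier in this section.

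First, I would write down the conclusion of Theorem \ref{th5},
\[\omega_q^2(T) \leq q^2 \omega^2(T) + (1 - q^2 + q\sqrt{1 - q^2}) \|T\|^2 - (1 - q^2) c^2(T),\]
and observe that the coefficient $q^2$ in front of $\omega^2(T)$ is non-negative, so any upper bound on $\omega^2(T)$ can be substituted while preserving the inequality.

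Next, I would invoke Kittaneh's inequality $\omega^2(T) \leq \tfrac{1}{2}\|T^*T + TT^*\|$ from \cite{kittaneh2005numerical} (the same inequality used to pass from Theorem \ref{th2} to Corollary \ref{cor1}) to replace the term $q^2\omega^2(T)$ by $\tfrac{q^2}{2}\|T^*T + TT^*\|$. This immediately yields the claimed estimate
\[\omega_q^2(T) \leq \frac{q^2}{2}\|T^*T + TT^*\| + (1 - q^2 + q\sqrt{1 - q^2}) \|T\|^2 - (1 - q^2) c^2(T).\]

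Since the argument is a one-line substitution into an already established inequality, there is no genuine obstacle; the only thing worth checking is that the signs of both coefficients $q^2$ and $(1-q^2)$ behave correctly under the substitution, which is clear because $q \in [0,1]$. It would also be natural to add a short sentence remarking, in parallel with the observation following Theorem \ref{th5}, that Corollary \ref{cor4} reduces to Corollary \ref{cor1} when $c(T)=0$ and strictly improves upon it whenever $0 \notin \overline{W(T)}$.
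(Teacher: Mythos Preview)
Your proposal is correct and matches the paper's own approach: the paper simply states that Corollary \ref{cor4} follows by applying Theorem \ref{th5} together with Kittaneh's bound $\omega^2(T)\le \tfrac12\|T^*T+TT^*\|$, exactly as you describe.
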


The following corollary relates the $q$-numerical radius, numerical radius, and the transcendental radius of an operator $T \in \mathcal{B(H)}.$

\begin{corollary} \label{th1}
Let $T \in \mathcal{B(H)}$ and $q\in [0,1],$ then
\begin{equation} \label{eq0}
    \omega_q(T) \leq q \omega(T) + \sqrt{1 - q^2} m(T).
\end{equation}
\end{corollary}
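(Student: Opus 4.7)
The plan is to reuse the pointwise inequality derived in the proof of Theorem \ref{th2} but, instead of squaring and majorizing crudely by $\|T\|^2$, to bound the two terms on the right-hand side directly in terms of $\omega(T)$ and $m(T)$. The case $q=1$ is immediate since $\omega_1(T)=\omega(T)$, so the work is for $q\in[0,1)$.

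For $q\in[0,1)$, given unit vectors $x,y\in\mathcal{H}$ with $\la x,y\ra=q$, I would write $y=qx+\sqrt{1-q^2}\,z$ with $\|z\|=1$ and $\la x,z\ra=0$, exactly as in the proof of Theorem \ref{th2}. That proof already establishes, via Bessel's inequality applied to an orthonormal set containing $x$ and $z$, the key estimate
\[
|\la Tx,y\ra|\;\le\;q\,|\la Tx,x\ra|+\sqrt{1-q^2}\,\bigl(\|Tx\|^2-|\la Tx,x\ra|^2\bigr)^{1/2}.
\]
At this point I would not square: instead I would bound $|\la Tx,x\ra|\le\omega(T)$ directly, and bound the parenthesized quantity by $m^2(T)$ via Prasanna's formula $m^2(T)=\sup_{\|x\|=1}(\|Tx\|^2-|\la Tx,x\ra|^2)$, which is stated right after Lemma 2.3 in the excerpt. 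This yields
\[
|\la Tx,y\ra|\;\le\;q\,\omega(T)+\sqrt{1-q^2}\,m(T).
\]

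Finally, taking the supremum over all admissible pairs $(x,y)$ with $\|x\|=\|y\|=1$ and $\la x,y\ra=q$ gives the desired bound on $\omega_q(T)$. There is no real obstacle here: the proof is essentially a repackaging of the first two lines of the computation in Theorem \ref{th2}, but the substitution of Prasanna's identity for the ambient bound $(\|Tx\|^2-|\la Tx,x\ra|^2)^{1/2}\le\|Tx\|\le\|T\|$ is what produces the sharper constant involving $m(T)$ rather than $\|T\|$. The only minor point to be careful about is that one must avoid squaring before applying the two suprema separately, since $\omega(T)$ and $m(T)$ in general are attained (or approached) on different unit vectors, and this linear-in-$|\la Tx,y\ra|$ argument handles that cleanly.
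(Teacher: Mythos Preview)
Your proposal is correct and matches the paper's own proof essentially line for line: the paper also handles $q=1$ trivially, decomposes $y=qx+\sqrt{1-q^2}\,z$, uses Bessel's inequality to obtain the pointwise estimate $|\la Tx,y\ra|\le q|\la Tx,x\ra|+\sqrt{1-q^2}(\|Tx\|^2-|\la Tx,x\ra|^2)^{1/2}$, and then bounds the two terms by $\omega(T)$ and $m(T)$ (via Prasanna's formula) before taking the supremum.
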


\begin{proof}
Since the case $q = 1$ is obvious, let $q \in [0,1)$ and $x, y \in \mathcal{H}$ such that $\|x\| = 1 = \|y\|$ with $\langle x,y \rangle = q.$ Then $y$ can be expressed as $y = q x + \sqrt{1 - q^2}z,$ where $\|z\| = 1$ and $\la x,z \ra = 0.$ In this setting we have
\begin{equation} \label{eq111}
 |\langle Tx, y \rangle | \leq q |\langle Tx, x \rangle| + \sqrt{1 - q^2}|\langle Tx, z \rangle | .
\end{equation}
Using Bessel's inequality, similar to Theorem \ref{th2}, we have
\[|\Txz|^2 \leq \|Tx\|^2 - |\la Tx, x \ra|^2.\]
Using the above relation in equation (\ref{eq111}), we get
\begin{eqnarray} \label{eq2}
  |\Txy | & \leq & q |\Tx| + \sqrt{1 - q^2} (\|Tx\|^2 - |\Tx|^2)^{\frac{1}{2}}\\
   & \leq & q \omega(T) + \q m(T). \nonumber
\end{eqnarray} 
Taking supremum over all $x$ and $y$ where $\|x\| = 1,$ $\|y\| = 1,$ $\la x,y \ra = q$ we get
\begin{equation*}
    \omega_q(T) \leq q \omega(T) + \q m(T).
\end{equation*}
This proves the result.
\end{proof}

Now we provide a more general $q$-numerical radius inequality from which several other inequalities related to product and commutators of operators follows. This result is $q$-numerical radius version of the inequality (\ref{eq8}). 

\begin{theorem}
If $A, B, C, D, S, T \in \mathcal{B(H)}$ and $q \in [0,1],$ then
    \begin{eqnarray*}
        \omega_q(ATB+CSD) &\leq & \frac{q}{2}\|B^*|T|^{2\alpha}B+A|T^*|^{2(1-\alpha)}A^*+D^*|S|^{2\alpha}D+C|S^*|^{2(1-\alpha)}C^*\|\\
       & & + \left(\sqrt{1-q^2}+\sqrt{2q\sqrt{1-q^2}}\right)\left(\sqrt{\|B^*|T|^{2\alpha}B\| \|A|T^*|^{2(1-\alpha)}A^*\|}+ \right. \\
       && \left. \sqrt{\|D^*|S|^{2\alpha}D^*\| \|C|S^*|^{2(1-\alpha)}C^*\|}\right).
    \end{eqnarray*}
\end{theorem}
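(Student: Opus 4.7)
I would recycle the unit-vector decomposition already used in Theorem \ref{th2}. Fix $x,y\in\mathcal{H}$ with $\|x\|=\|y\|=1$ and $\langle x,y\rangle=q\in[0,1)$ (the case $q=1$ reduces to the known Kittaneh inequality (\ref{eq8})), and write $y=qx+\sqrt{1-q^2}\,z$ with $\|z\|=1$ and $\langle x,z\rangle=0$. Denote the four positive operators in the statement by $P_1=B^*|T|^{2\alpha}B$, $Q_1=A|T^*|^{2(1-\alpha)}A^*$, $P_2=D^*|S|^{2\alpha}D$, $Q_2=C|S^*|^{2(1-\alpha)}C^*$. By the triangle inequality, $|\langle(ATB+CSD)x,y\rangle|$ splits into $|\langle TBx,A^*y\rangle|+|\langle SDx,C^*y\rangle|$, and Lemma \ref{lemma1} gives
\begin{equation*}
|\langle TBx,A^*y\rangle|\le \sqrt{\langle P_1 x,x\rangle}\,\sqrt{\langle Q_1 y,y\rangle},
\end{equation*}
with the analogous bound for the second summand. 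Thus everything is reduced to estimating $\sqrt{\langle Q_i y,y\rangle}$ for $i=1,2$.

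\textbf{Main estimate.} Expanding with $y=qx+\sqrt{1-q^2}\,z$ gives
\begin{equation*}
\langle Q_i y,y\rangle=q^2\langle Q_i x,x\rangle+2q\sqrt{1-q^2}\,\mathrm{Re}\langle Q_i x,z\rangle+(1-q^2)\langle Q_i z,z\rangle,
\end{equation*}
and Cauchy--Schwarz for the positive operator $Q_i$ bounds the cross term by $\sqrt{\langle Q_i x,x\rangle\langle Q_i z,z\rangle}$. The key (and slightly non-obvious) step is to then apply $\sqrt{a+b+c}\le\sqrt{a}+\sqrt{b}+\sqrt{c}$ for $a,b,c\ge 0$, rather than completing the square. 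Doing so produces three pieces whose coefficients are precisely $q$, $\sqrt{2q\sqrt{1-q^2}}$, and $\sqrt{1-q^2}$, which explains the otherwise surprising quartic-root factor in the target bound.

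\textbf{Assembly.} Multiplying through by $\sqrt{\langle P_i x,x\rangle}$, AM--GM handles the first piece as $q\sqrt{\langle P_i x,x\rangle\langle Q_i x,x\rangle}\le \tfrac{q}{2}(\langle P_i x,x\rangle+\langle Q_i x,x\rangle)$, while the other two pieces are each controlled by the corresponding coefficient times $\sqrt{\|P_i\|\,\|Q_i\|}$, using $\langle Pu,u\rangle\le\|P\|$ for positive $P$ and unit $u$ (together with $(\langle Q_i x,x\rangle\langle Q_i z,z\rangle)^{1/4}\le\|Q_i\|^{1/2}$ for the middle piece). Summing the bounds for $i=1,2$ collapses the AM--GM parts into $\tfrac{q}{2}\langle(P_1+Q_1+P_2+Q_2)x,x\rangle\le\tfrac{q}{2}\|P_1+Q_1+P_2+Q_2\|$, and taking the supremum over admissible $(x,y)$ yields the claimed inequality.

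\textbf{Main obstacle.} The only non-routine step is spotting that one should apply $\sqrt{a+b+c}\le\sqrt{a}+\sqrt{b}+\sqrt{c}$ to the three-term expansion of $\langle Q_i y,y\rangle$; a direct completion-of-the-square argument à la Theorem \ref{th2} gives only the $\sqrt{1-q^2}$ coefficient on $\sqrt{\|P_i\|\,\|Q_i\|}$ and misses the $\sqrt{2q\sqrt{1-q^2}}$ refinement. Everything else is a routine combination of Lemma \ref{lemma1}, Cauchy--Schwarz for positive operators, AM--GM, and the standard bound $\langle Pu,u\rangle\le\|P\|$.
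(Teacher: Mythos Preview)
Your proposal is correct and follows essentially the same route as the paper: decompose $y=qx+\sqrt{1-q^2}\,z$, split via the triangle inequality, apply Lemma \ref{lemma1}, expand $\langle Q_iy,y\rangle$ into three nonnegative summands, use $\sqrt{a+b+c}\le\sqrt a+\sqrt b+\sqrt c$, then AM--GM on the $q$-piece and norm bounds on the other two. The only cosmetic difference is that the paper keeps the cross term as $|\langle Q_ix,z\rangle|$ and bounds it directly by $\|Q_i\|$, whereas you first pass through $\sqrt{\langle Q_ix,x\rangle\langle Q_iz,z\rangle}$; both reach $\sqrt{\|P_i\|\,\|Q_i\|}$ in one step.
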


\begin{proof}
The case $q=1$ follows directly from the relation (\ref{eq8}) which was derived in \cite{kittaneh2005numerical}. Let $x,y\in \mathcal{H}$ such that $\|x\| = \|y\| = 1$ with $\la x,y \ra = q.$ Then we have $y = qx + \q z$ where $\|z\| = 1$ and $\la x,z \ra = 0.$ Then from Lemma \ref{lemma1} we have
    \begin{eqnarray*}
        &&  | \la (ATB+CSD)x,y\ra | \\
        & \leq &  | \la TBx,A^*y\ra |+| \la SDx,C^*y\ra | \\
        & \leq & \la B^*|T|^{2\alpha}Bx,x\ra^{\frac{1}{2}}\la A|T^*|^{2(1-\alpha)}A^*y,y\ra^{\frac{1}{2}} 
        + \la D^*|S|^{2\alpha}Dx,x\ra^{\frac{1}{2}}\la C|S^*|^{2(1-\alpha)}C^*y,y\ra^{\frac{1}{2}} \\
        & \leq &  \la B^*|T|^{2\alpha}Bx,x\ra^{\frac{1}{2}}\left(q^2\la A|T^*|^{2(1-\alpha)}A^*x,x\ra+(1-q^2)\la A|T^*|^{2(1-\alpha)}A^*z,z\ra \right.\\
        & & \left. + 2q\sqrt{1-q^2} \left|\la A|T^*|^{2(1-\alpha)}A^*x,z\ra \right| \right)^{\frac{1}{2}} \\
        & & + \la D^*|S|^{2\alpha}Dx,x\ra^{\frac{1}{2}}\left(q^2\la C|S^*|^{2(1-\alpha)}C^*x,x\ra+(1-q^2)\la C|S^*|^{2(1-\alpha)}C^*z,z\ra \right.\\
        & & \left. + 2q\sqrt{1-q^2} \left|\la C|S^*|^{2(1-\alpha)}C^*x,z\ra \right| \right)^{\frac{1}{2}} \\
        & \leq & \la B^*|T|^{2\alpha}Bx,x\ra^{\frac{1}{2}}\left(q\la A|T^*|^{2(1-\alpha)}A^*x,x\ra^{\frac{1}{2}}+\sqrt{1-q^2}\la A|T^*|^{2(1-\alpha)}A^*z,z\ra^{\frac{1}{2}} \right.\\
        & & \left. + \sqrt{2q\sqrt{1-q^2}} \left|\la A|T^*|^{2(1-\alpha)}A^*x,z\ra \right|^{\frac{1}{2}} \right) \\
        & & + \la D^*|S|^{2\alpha}Dx,x\ra^{\frac{1}{2}}\left(q\la C|S^*|^{2(1-\alpha)}C^*x,x\ra^{\frac{1}{2}}+\sqrt{1-q^2}\la C|S^*|^{2(1-\alpha)}C^*z,z\ra^{\frac{1}{2}} \right.\\
        & & \left. + \sqrt{2q\sqrt{1-q^2}} \left|\la C|S^*|^{2(1-\alpha)}C^*x,z\ra \right|^{\frac{1}{2}} \right) \\
        & = & q \left(\la B^*|T|^{2\alpha}Bx,x\ra^{\frac{1}{2}}\la A|T^*|^{2(1-\alpha)}A^*x,x\ra^{\frac{1}{2}} +\la D^*|S|^{2\alpha}Dx,x\ra^{\frac{1}{2}}\la C|S^*|^{2(1-\alpha)}C^*x,x\ra^{\frac{1}{2}}\right) \\
        & & + \sqrt{1-q^2} \left(\la B^*|T|^{2\alpha}Bx,x\ra^{\frac{1}{2}}\la A|T^*|^{2(1-\alpha)}A^*z,z\ra^{\frac{1}{2}} +\la D^*|S|^{2\alpha}Dx,x\ra^{\frac{1}{2}} \right. \times \\
        & & \left. \la C|S^*|^{2(1-\alpha)}C^*z,z\ra^{\frac{1}{2}}\right) + \sqrt{2q\sqrt{1-q^2}} \left(\la B^*|T|^{2\alpha}Bx,x\ra^{\frac{1}{2}} \left|\la A|T^*|^{2(1-\alpha)}A^*x,z\ra\right|^{\frac{1}{2}} \right. \\
        && \left. +\la D^*|S|^{2\alpha}Dx,x\ra^{\frac{1}{2}}\left| \la C|S^*|^{2(1-\alpha)}C^*x,z\ra\right|^{\frac{1}{2}}\right)\\
        & \leq & \frac{q}{2} \la \left(B^*|T|^{2\alpha}B+A|T^*|^{2(1-\alpha)}A^*+D^*|S|^{2\alpha}D+C|S^*|^{2(1-\alpha)}C^*\right)x,x\ra \\
        & & + \sqrt{1-q^2} \left(\la B^*|T|^{2\alpha}Bx,x\ra^{\frac{1}{2}}\la A|T^*|^{2(1-\alpha)}A^*z,z\ra^{\frac{1}{2}} +\la D^*|S|^{2\alpha}Dx,x\ra^{\frac{1}{2}} \right. \times \\
        & & \left. \la C|S^*|^{2(1-\alpha)}C^*z,z\ra^{\frac{1}{2}}\right) + \sqrt{2q\sqrt{1-q^2}} \left(\la B^*|T|^{2\alpha}Bx,x\ra^{\frac{1}{2}}\left|\la A|T^*|^{2(1-\alpha)}A^*x,z\ra\right|^{\frac{1}{2}} \right.\\
        && \left. +\la D^*|S|^{2\alpha}Dx,x\ra^{\frac{1}{2}}\left|\la C|S^*|^{2(1-\alpha)}C^*x,z\ra\right|^{\frac{1}{2}}\right)\\
        & \leq & \frac{q}{2} \|B^*|T|^{2\alpha}B+A|T^*|^{2(1-\alpha)}A^*+D^*|S|^{2\alpha}D+C|S^*|^{2(1-\alpha)}C^*\|+\\
        & & \left(\sqrt{1-q^2}+\sqrt{2q\sqrt{1-q^2}}\right)\left(\sqrt{\|B^*|T|^{2\alpha}B\| \|A|T^*|^{2(1-\alpha)}A^*\|}+\sqrt{\|D^*|S|^{2\alpha}D\| \|C|S^*|^{2(1-\alpha)}C^*\|}\right).
    \end{eqnarray*}
    The result follows by taking supremum over all such $x,y \in \mathcal{H}$ with $\|x\| = \|y\| = 1,$ and $\la x,y \ra = q$ on the left-hand side.
\end{proof}
Here we mention a few particular cases of the above theorem.
\begin{remark}
    Consider a few particular cases of the above result:
    \begin{enumerate}
        \item[(i)] If $T=B=I$ and $S=0$ then 
        \begin{equation*}
            \omega_q(A)\leq\frac{q}{2}\|AA^*+I\|+\left(\sqrt{1-q^2}+\sqrt{2q\sqrt{1-q^2}}\right)\|A\|.
        \end{equation*}
        Also, if $A=B=I$, $S = 0$ and $\alpha = \frac{1}{2}$ then we have
        \begin{eqnarray*}
            \omega_q(T) & \leq & \frac{q}{2}\||T| + |T^*|\|+\left(\sqrt{1-q^2}+\sqrt{2q\sqrt{1-q^2}}\right)\sqrt{\||T|\| \||T^*|\|}\\
            & = & \frac{q}{2}\||T| + |T^*|\|+\left(\sqrt{1-q^2}+\sqrt{2q\sqrt{1-q^2}}\right) \|T\|.
        \end{eqnarray*}
        
        \item[(ii)] If $T=I$ and $S=0$ then 
        \begin{equation*}
          \omega_q(AB)\leq\frac{q}{2}\|AA^*+BB^*\|+\left(\sqrt{1-q^2}+\sqrt{2q\sqrt{1-q^2}}\right)\|A\|\|B\|.  
        \end{equation*}
    
        \item[(iii)] If $T=I$, $C=B$ and $D=A$ then
        \begin{equation*}
            \omega_q(AB+BA)\leq\frac{q}{2}\|AA^*+B^*B+A^*A+BB^*\|+\left(\sqrt{1-q^2}+\sqrt{2q\sqrt{1-q^2}}\right)\|A\|^2\|B\|^2.
        \end{equation*}
    \end{enumerate}
\end{remark}

Now we focus on some bounds of $q$-numerical radius which are not dependent on $q$. For the following theorem, we use a similar concept of Theorem 2 in \cite{kittaneh2005numerical}.

\begin{theorem}
    If $A, B, C, D, S, T \in \mathcal{B(H)}$ and $q \in [0,1],$ then
    \begin{equation*}
        \omega_q(ATB + CSD) \leq \frac{1}{2} \left( \|A|T^*|^{2(1 - \alpha)} A^* + C |S^*|^{2(1 - \alpha)} C^* \|  + \| B^* |T|^{2 \alpha} B + D^* |S|^{2 \alpha} D \| \right),
    \end{equation*}
    holds for all $\alpha \in [0,1].$
\end{theorem}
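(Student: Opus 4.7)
The plan is to adapt Kittaneh's proof of the analogous inequality for the numerical radius (Theorem~2 in \cite{kittaneh2005numerical}) to the $q$-numerical radius setting, the point being that the final upper bound does not depend on $q$ at all. First, I fix $x, y \in \mathcal{H}$ with $\|x\| = \|y\| = 1$ and $\la x, y\ra = q$. Applying the triangle inequality splits the inner product of interest:
\[|\la (ATB + CSD)x, y\ra| \le |\la TBx, A^*y\ra| + |\la SDx, C^*y\ra|.\]

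Next, I would apply Lemma~\ref{lemma1} (Furuta's mixed Cauchy--Schwarz inequality) to each of these two inner products, which yields $|\la TBx, A^*y\ra| \le \la B^*|T|^{2\alpha}Bx, x\ra^{1/2} \la A|T^*|^{2(1-\alpha)}A^*y, y\ra^{1/2}$, and analogously for the term built from $S, C, D$. Crucially, each factor depends on $x$ or on $y$ alone. I then combine the two products by the Cauchy--Schwarz inequality for sums, $a_1 b_1 + a_2 b_2 \le (a_1^2 + a_2^2)^{1/2}(b_1^2 + b_2^2)^{1/2}$, arriving at
\[|\la (ATB + CSD)x, y\ra| \le \la Px, x\ra^{1/2}\la Qy, y\ra^{1/2},\]
where $P := B^*|T|^{2\alpha}B + D^*|S|^{2\alpha}D$ and $Q := A|T^*|^{2(1-\alpha)}A^* + C|S^*|^{2(1-\alpha)}C^*$ are both positive.

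To finish, a single AM--GM step gives $\la Px, x\ra^{1/2}\la Qy, y\ra^{1/2} \le \tfrac{1}{2}(\la Px, x\ra + \la Qy, y\ra)$. Taking the supremum over admissible $x, y$ and using that $\sup_{\|x\|=1}\la Px, x\ra = \|P\|$ for positive $P$ (and similarly for $Q$) yields the claimed bound $\omega_q(ATB+CSD) \le \tfrac{1}{2}(\|P\| + \|Q\|)$.

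I do not anticipate a serious obstacle: the argument is essentially a four-line computation once the chain of inequalities is correctly set up. The only subtlety is the ordering of the two Cauchy--Schwarz applications; pairing Furuta's inequality with the sum-form Cauchy--Schwarz is what lets the $x$- and $y$-dependencies decouple, and the subsequent AM--GM step completely separates them. This decoupling is precisely why the final bound carries no dependence on $q$: the constraint $\la x, y\ra = q$ plays no role once the suprema over $x$ and over $y$ are taken independently.
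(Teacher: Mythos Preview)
Your proposal is correct and follows essentially the same route as the paper: fix unit vectors $x,y$ with $\la x,y\ra=q$, apply the triangle inequality, invoke Lemma~\ref{lemma1} on each term, and then use AM--GM to separate the $x$- and $y$-dependent factors before taking suprema. The only cosmetic difference is that the paper applies AM--GM termwise (i.e.\ $a_ib_i\le\tfrac12(a_i^2+b_i^2)$ for $i=1,2$) and sums, whereas you insert an intermediate sum-form Cauchy--Schwarz step $a_1b_1+a_2b_2\le(a_1^2+a_2^2)^{1/2}(b_1^2+b_2^2)^{1/2}$ before a single AM--GM; both chains land on the same expression $\tfrac12(\la Px,x\ra+\la Qy,y\ra)$, so the extra step is harmless but unnecessary.
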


\begin{proof}
    Let $x,y \in \mathcal{H}$ with $\|x\| = \|y\| =1$ and $\la x,y \ra = q.$ Then from Lemma \ref{lemma1} and AM-GM inequalty we have
    \begin{eqnarray*}
        |\la (ATB + CSD)x,y \ra | & \leq & |\la TBx,A^*y \ra| + |\la SDx,C^*y \ra| \\
        & \leq & \la |T|^{2 \alpha} Bx, Bx \ra^{\frac{1}{2}} \la |T^*|^{2(1- \alpha)} A^*y, A^*y \ra^{\frac{1}{2}}\\
        && + \la |S|^{2 \alpha} Dx, Dx \ra^{\frac{1}{2}} \la |S^*|^{2(1- \alpha)} C^*y, C^*y \ra^{\frac{1}{2}}\\
        & \leq & \frac{1}{2} \left(\la |T|^{2 \alpha} Bx, Bx \ra +  \la |T^*|^{2(1- \alpha)} A^*y, A^*y \ra \right)\\
        && + \frac{1}{2} \left(\la |S|^{2 \alpha} Dx, Dx \ra +  \la |S^*|^{2(1- \alpha)} C^*y, C^*y \ra \right)\\
        & = & \frac{1}{2} \la \left(A|T^*|^{2(1- \alpha)} A^* +  C|S^*|^{2(1- \alpha)} C^*\right)y, y \ra \\
        && + \frac{1}{2} \la \left(B^*|T|^{2 \alpha} B + D^*|S|^{2 \alpha}D\right)x, x \ra \\
        & \leq & \frac{1}{2} \|A|T^*|^{2(1 - \alpha)} A^* + C |S^*|^{2(1 - \alpha)} C^* \|  + \frac{1}{2} \| B^* |T|^{2 \alpha} B + D^* |S|^{2 \alpha} D \|.
    \end{eqnarray*}
    The required result follows by taking supremum over all $x,y \in \mathcal{H}$ with $\|x\| = \|y\| =1$ and $\la x,y \ra = q$ on the left-hand side.
\end{proof}

\begin{remark}
\begin{enumerate}
    \item[(i)] In particular, let $A = B = I$ and $S = 0$ then we have
    \begin{equation*}
        \omega_q(T) \leq \frac{1}{2} \left( \| |T|^{2 \alpha} \| + \| |T^*|^{2(1- \alpha)} \| \right), \ \alpha \in [0,1].
    \end{equation*}
    By choosing $\alpha = \frac{1}{2}$ and using $\| |T| \| = \||T^*| \| = \|T\|,$ we get
    \[\omega_q(T) \leq \frac{1}{2}\left( \| |T| \| + \| |T^*| \| \right) = \|T\|. \]

    \item[(ii)] If $T=I$ and $S=0$ then we get
    \[\omega_q(AB) \leq \frac{1}{2} (\|AA^*\| + \|B^*B\|).\]
    Also by putting $A = I,$ $T = A$ and $S = 0$ we have
    \[\omega_q(AB) \leq \frac{1}{2} (\| |A^*|^{2(1- \alpha)} \| + \| B^* |A|^{2 \alpha} B\|),\]
    and by choosing $\alpha = \frac{1}{2}$ it follows
    \[\omega_q(AB) \leq \frac{1}{2} (\| A \| + \| B^* |A| B\|).\]
\end{enumerate}
    
\end{remark}

\section{$q$-numerical radius of $2 \times 2$ operator matrices}

In this section, we derive a few bounds for the $q$-numerical range of $2 \times 2$ operator matrices. Let $\h$ and $\hh$ are two complex Hilbert spaces with the inner product $\la.,. \ra$. Then $\hhh$ forms a Hilbert space and any operator $T \in \mathcal{B}({\hhh})$ has an $2 \times 2$ matrix representation of the form
\[T = \begin{bmatrix} A & B\\
C &D \end{bmatrix},\]
where $A \in \mathcal{B}(\h), B \in \mathcal{B}(\hh, \h), C \in \mathcal{B}(\h, \hh), \ \mbox{and} \ D \in \mathcal{B}(\hh).$

 As the $q$-numerical radius forms a weakly unitarily invariant norm, from the result mentioned in (P. 107 \cite{bhatia2013matrix}), we can deduce the following inequalities
\begin{equation} \label{eq10}
    \omega_q \left( \begin{bmatrix} A & 0\\
0 &D \end{bmatrix} \right) \leq \omega_q \left( \begin{bmatrix} A & B\\
C &D \end{bmatrix} \right)
\end{equation}
and
\begin{equation} \label{eq11}
 \omega_q \left( \begin{bmatrix} 0 & B\\
C & 0 \end{bmatrix} \right) \leq \omega_q \left( \begin{bmatrix} A & B\\
C &D \end{bmatrix} \right).   
\end{equation}

The numerical radius of diagonal operator matrices enjoy the following equality (\cite{hirzallah2012numerical})
\begin{equation} \label{eq25}
   \omega \left( \begin{bmatrix} A & 0\\
0 & D \end{bmatrix} \right) = \max \{\omega(A), \omega(D)\}. 
\end{equation}
A similar conclusion is not true for $q$-numerical radius. This is seen in the Example \ref{example3} that follows. The following lemma on the $q$-numerical radius of Hermitian matrices is required for this. 

\begin{lemma}[Theorem 3.5 \cite{gau2021numerical}] \label{lemma5}
If $T$ is an $n \times n$ Hermitian matrix with eigenvalues $\lambda_1 \geq \lambda_2 \geq \cdots \geq \lambda_n$ and $|q|\leq 1$, then the numerical range $W_q(T)$ equals the (closed) elliptic disc with foci $q\lambda_1$ and $q \lambda_n$ and minor axis of length $\sqrt{1 - |q|^2}(\lambda_1 - \lambda_n).$ 
\end{lemma}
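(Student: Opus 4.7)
The plan is to reduce to the diagonal case by unitary invariance (Lemma~1.1(iii)) and the spectral theorem, and after absorbing $\arg q$ via Lemma~1.1(v), treat $q\in[0,1]$ and $T=\mathrm{diag}(\lambda_1,\ldots,\lambda_n)$. For unit vectors $x,y\in\mathbb{C}^n$ with $\la x,y\ra=q$, introduce the scalars $w_i=x_i\overline{y_i}$, which satisfy $\sum_i w_i=q$ and, by Cauchy--Schwarz, $\sum_i|w_i|\leq\sum_i|x_i||y_i|\leq 1$. The key identity is then $\la Tx,y\ra=\sum_i\lambda_i w_i$, turning the problem into one about complex numbers on the real spectrum of $T$.

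The containment $W_q(T)\subseteq E$, where $E$ is the closed ellipse with foci $q\lambda_1,q\lambda_n$ and sum of distances $\lambda_1-\lambda_n$, would follow from a single triangle-inequality computation. Setting $z=\sum_i\lambda_i w_i$,
\begin{align*}
|z-q\lambda_1|+|z-q\lambda_n| &= \Big|\sum_i(\lambda_i-\lambda_1)w_i\Big|+\Big|\sum_i(\lambda_i-\lambda_n)w_i\Big|\\
&\leq \sum_i(\lambda_1-\lambda_i)|w_i|+\sum_i(\lambda_i-\lambda_n)|w_i|\\
&= (\lambda_1-\lambda_n)\sum_i|w_i|\leq \lambda_1-\lambda_n,
\end{align*}
placing $z$ in $E$. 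An elementary conic-section check confirms: with semi-major axis $(\lambda_1-\lambda_n)/2$ and half-focal-distance $q(\lambda_1-\lambda_n)/2$, the semi-minor axis equals $\q(\lambda_1-\lambda_n)/2$, matching the claimed minor axis length $\q(\lambda_1-\lambda_n)$.

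For the reverse containment $E\subseteq W_q(T)$, I would restrict attention to vectors supported only on coordinates $1$ and $n$, reducing to the $2\times 2$ case $T'=\mathrm{diag}(\lambda_1,\lambda_n)$. One verifies that $\mathrm{diag}(\lambda_1,\lambda_n)$ is unitarily similar to $\begin{pmatrix}\gamma & \rho\\ \rho & \gamma\end{pmatrix}$ with $\gamma=(\lambda_1+\lambda_n)/2$ and $\rho=(\lambda_1-\lambda_n)/2$; feeding $t=0$, $a=b=\rho$, $c=\rho$, $d=0$ into Lemma~\ref{lemma4} collapses its formula to $W_q(T')=\{\gamma q+r\rho\cos s+i\,r\rho\q\sin s:0\leq r\leq 1,\,0\leq s\leq 2\pi\}$, which is precisely the ellipse $E$. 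Padding the realizing pairs with zeros in coordinates $2,\ldots,n-1$ then gives $E\subseteq W_q(T)$.

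The main obstacle I expect is in the $2\times 2$ step if one insists on a self-contained derivation rather than appealing to Lemma~\ref{lemma4}. Concretely, for every $u$ with $|u|+|u-q|\leq 1$ one must exhibit unit vectors $x,y\in\mathbb{C}^2$ with $\la x,y\ra=q$ and $x_1\overline{y_1}=u$. A tractable route is to parametrize $(|x_1|,|x_2|)=(\cos\theta,\sin\theta)$ and $(|y_1|,|y_2|)=(\cos\psi,\sin\psi)$ with $\theta,\psi\in[0,\pi/2]$, solve the pair $\cos(\theta-\psi)=|u|+|u-q|$ and $\cos(\theta+\psi)=|u|-|u-q|$ to enforce the modulus equations $|x_1||y_1|=|u|$ and $|x_2||y_2|=|u-q|$, and then freely assign the phases of $x_i,y_i$ so that $x_1\overline{y_1}=u$ and $x_2\overline{y_2}=q-u$. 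Once this realization is available, the two inclusions combine to give $W_q(T)=E$ as asserted.
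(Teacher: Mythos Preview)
The paper does not prove this statement at all: Lemma~\ref{lemma5} is quoted verbatim as Theorem~3.5 of \cite{gau2021numerical} and used as a black box in Example~\ref{example3}. There is therefore no in-paper argument to compare against.

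That said, your proof is correct. The reduction to the diagonal case via unitary invariance and the spectral theorem is standard, and the key inclusion $W_q(T)\subseteq E$ via the sum-of-distances bound
\[
|z-q\lambda_1|+|z-q\lambda_n|\leq(\lambda_1-\lambda_n)\sum_i|w_i|\leq\lambda_1-\lambda_n
\]
is clean and complete. For the reverse inclusion, invoking Lemma~\ref{lemma4} on the $2\times 2$ principal block $\mathrm{diag}(\lambda_1,\lambda_n)$ is legitimate within the paper's framework, since Lemma~\ref{lemma4} is cited independently from \cite{nakazato1994c} and is logically prior. Your identification $a=b=\rho=(\lambda_1-\lambda_n)/2$, $c=\rho$, $d=0$ correctly collapses the parametrization to the claimed ellipse. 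The self-contained alternative you sketch for realizing any $u$ with $|u|+|u-q|\leq 1$ as $x_1\overline{y_1}$ also goes through: the system $\cos(\theta-\psi)=|u|+|q-u|$, $\cos(\theta+\psi)=|u|-|q-u|$ is solvable with $\theta,\psi\in[0,\pi/2]$ precisely under the constraint $|u|+|q-u|\leq 1$, and the phases can then be assigned freely. Either route closes the argument.
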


\begin{example}\label{example3}

Let $q \in [0,1]$. For any $2 \times 2$ matrix $T=\begin{bmatrix} a & 0\\
0 & d \end{bmatrix}$ with $a,d > 0$ and $a \neq d$, the $q$-numerical range of $T$ is given by
\begin{equation}
    W_q(T) = \left\lbrace (x,y) : \frac{\left( x - \frac{q}{2} (a+d)\right)^2}{\frac{1}{4}(a-d)^2} + \frac{y^2}{\frac{1}{4}(1 - q^2)(a-d)^2} \leq 1 \right\rbrace. 
\end{equation}
The $q$-numerical radius of $T=\begin{bmatrix} a & 0\\
0 & d \end{bmatrix}$ is given by the following maximization problem
\[\max \sqrt{x^2+y^2}, \ \mbox{subject to } \frac{\left( x - \frac{q}{2} (a+d)\right)^2}{\frac{1}{4}(a-d)^2} + \frac{y^2}{\frac{1}{4}(1 - q^2)(a-d)^2} = 1.\]
Solving this we get 
\[\max \sqrt{x^2 + y^2} = \left( \frac{q}{2}(a+d) + \frac{1}{2}|a-d| \right) \ \mbox{occurs at } \left( \frac{q}{2}(a+d) + \frac{1}{2}|a-d|, 0 \right). \]
Hence \[\omega_q\left( \begin{bmatrix} a & 0\\
0 & d \end{bmatrix} \right) =\left( \frac{q}{2}(a+d) + \frac{1}{2}|a-d| \right) > q \max \{a,d \} \ \mbox{when } q \neq 1. \]
The above equation implies that when $q \neq 1,$ the $q$-numerical range does not satisfy the relation
\[\omega_q \left( \begin{bmatrix} A & 0\\
0 & D \end{bmatrix} \right) = \max \{\omega(A)_q, \omega_q(D)\}\]
similar to the relation mentioned in (\ref{eq25}).

\end{example}

In this regard, the following result provides the upper and lower bound of the $q$-numerical radius.

\begin{theorem} \label{th6}
    Let $\h$ and $\hh$ are Hilbert spaces and let $A \in \mathcal{B}(\h),$ $B \in \mathcal{B}(\hh,\h),$ $C \in \mathcal{B}(\h, \hh),$ $D \in \mathcal{B}(\hh)$ and $q \in [0,1].$ Then the following inequalities hold
    \begin{enumerate}
        \item[(i)] $ \max \left\lbrace \omega_q(A), \omega_q(D), \omega_q \left( \begin{bmatrix} 0 & B\\
C & 0 \end{bmatrix} \right)\right\rbrace \leq \omega_q \left( \begin{bmatrix} A & B\\
C &D \end{bmatrix} \right),$\\

\item[(ii)] $ \omega_q \left( \begin{bmatrix} A & B\\
C &D \end{bmatrix} \right) \leq \max \{\|A\|, \|D\|\} + \left(1 - \frac{3q^2}{4} + q \q \right)^{\frac{1}{2}} (\|B\| + \|C\|),$\\

\item[(iii)] $ \omega_q \left( \begin{bmatrix} A & B\\
C &D \end{bmatrix} \right) \leq \q \left(\|A\|^2 + \|B\|^2 + \|C\|^2 + \|D\|^2 \right)^{\frac{1}{2}}$ \\ 
\hspace*{2.7cm} $+ q \left( \max \{\omega(A), \omega(D)\} + \frac{\|B\| + \|C\|}{2}\right).$
    \end{enumerate}
\end{theorem}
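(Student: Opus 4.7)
The plan is to treat the three parts separately, each by reducing to results already established in the excerpt.

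For part (i), the off-diagonal lower bound is precisely (\ref{eq11}), so only $\omega_q(A)\leq\omega_q(T)$ and $\omega_q(D)\leq\omega_q(T)$ require a new argument. I would use a direct embedding: given $x_1,y_1\in\h$ of unit norm with $\la x_1,y_1\ra=q$, the vectors $x=x_1\oplus 0$ and $y=y_1\oplus 0$ in $\hhh$ remain unit vectors, satisfy $\la x,y\ra=q$, and yield $\la Tx,y\ra=\la Ax_1,y_1\ra$. Hence $W_q(A)\subseteq W_q(T)$; the argument for $D$ is identical, using vectors supported in the second component.

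For part (ii), I would exploit the semi-norm property of $\omega_q$ via the decomposition of $T$ into its block-diagonal part $T_0=\mathrm{diag}(A,D)$ plus the two nilpotent off-diagonal blocks $T_B$ (containing $B$ in the $(1,2)$ position) and $T_C$ (containing $C$ in the $(2,1)$ position). The diagonal part admits the trivial bound $\omega_q(T_0)\leq\|T_0\|=\max\{\|A\|,\|D\|\}$, implicit in (\ref{oth1}). Each nilpotent block $N\in\{T_B,T_C\}$ satisfies $N^2=0$, so Corollary \ref{cor2} collapses to $\omega_q^2(N)\leq\bigl(1-\tfrac{3q^2}{4}+q\q\bigr)\|N\|^2$, with $\|T_B\|=\|B\|$ and $\|T_C\|=\|C\|$. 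Summing the three contributions via the semi-norm triangle inequality produces (ii).

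For part (iii), I would start from Corollary \ref{th1}, which gives $\omega_q(T)\leq q\omega(T)+\q\,m(T)$, and control the two terms independently. Since $m(T)\leq\|T\|$ (take $\lambda=0$), and a direct blockwise expansion $\|Tx\|^2=\|Ax_1+Bx_2\|^2+\|Cx_1+Dx_2\|^2$ combined with the triangle and Cauchy--Schwarz inequalities yields $\|T\|\leq(\|A\|^2+\|B\|^2+\|C\|^2+\|D\|^2)^{1/2}$, the factor $\q\,m(T)$ is controlled. For $\omega(T)$, I would apply the semi-norm property with the same $T_0+T_B+T_C$ decomposition, invoking (\ref{eq25}) for the diagonal piece and the standard identities $\omega(T_B)=\|B\|/2$, $\omega(T_C)=\|C\|/2$ (a consequence of (\ref{int2}) since $T_B^2=T_C^2=0$) for the nilpotent pieces; this gives $\omega(T)\leq\max\{\omega(A),\omega(D)\}+(\|B\|+\|C\|)/2$. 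Substituting both estimates into the Corollary \ref{th1} bound produces (iii).

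The main obstacle is not the individual computations but identifying the right combination of prior results in part (iii): one must recognise that Corollary \ref{th1} is the correct starting point, accept the crude estimate $m(T)\leq\|T\|$ (no finer choice of $\lambda$ is useful without additional structural information on the blocks), and see that $\omega(T)$ should itself be broken up using the same semi-norm decomposition employed in (ii). Parts (i) and (ii) are essentially formal once the appropriate block splitting is fixed.
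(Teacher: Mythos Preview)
Parts (i) and (ii) of your proposal match the paper's proof: the same embedding of $\h$ (resp.\ $\hh$) into $\hhh$ for the diagonal lower bounds together with the pinching inequality (\ref{eq11}) for the off-diagonal piece, and the same three-term semi-norm splitting $T=T_0+T_B+T_C$ for the upper bound, handling the nilpotent off-diagonal blocks via $N^2=0$. (The paper cites an external result of \cite{fakhri2022q} for that nilpotent estimate while you invoke the paper's own Corollary \ref{cor2}; both yield the identical coefficient $(1-\tfrac{3q^2}{4}+q\q)^{1/2}$.)

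For (iii) you take a genuinely different and somewhat cleaner route. The paper argues directly at the level of vectors: it writes $y=qx+\q\,z$ with $z\perp x$, parametrises the component norms as $\|x_1\|=\cos\theta$, $\|x_2\|=\sin\theta$, $\|z_1\|=\cos\phi$, $\|z_2\|=\sin\phi$, expands $|\la Tx,y\ra|$ into four block terms, and then optimises over $\theta,\phi$ via the identities $\max_\theta(a\cos\theta+b\sin\theta)=\sqrt{a^2+b^2}$ and $\max_\theta(a\cos^2\theta+b\sin^2\theta)=\max\{a,b\}$. You instead feed the whole block matrix into Corollary \ref{th1} and bound $\omega(T)$ and $m(T)$ separately, reusing the decomposition from (ii) together with (\ref{eq25}), the equality case $\omega(N)=\tfrac12\|N\|$ for $N^2=0$, the crude $m(T)\leq\|T\|$, and the blockwise Cauchy--Schwarz estimate $\|T\|\le(\|A\|^2+\|B\|^2+\|C\|^2+\|D\|^2)^{1/2}$. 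Your argument is more modular and recycles earlier machinery; the paper's is self-contained and makes visible how each term of the bound emerges from the trigonometric maximisation. Both arrive at exactly the same inequality.
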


\begin{proof}
\begin{enumerate}

    \item[(i)] The case $q=1$ follows directly from relation (\ref{eq25}). Let $q \in [0,1)$ and $x_1, y_1 \in \mathcal{H}_1$ such that $\|x_1\| = \|y_1\| = 1$ with $\la x_1, y_1 \ra = q$. Then
    \begin{eqnarray*}
        \omega_q \left( \begin{bmatrix} A & B\\
C &D \end{bmatrix} \right) & \geq & \left| \la \begin{bmatrix} A & B\\
C &D \end{bmatrix} \begin{pmatrix} x_1  \\ 0 \end{pmatrix}, \begin{pmatrix} y_1  \\ 0 \end{pmatrix}  \ra  \right|\\
 & = & |\la Ax_1, y_1 \ra|.
    \end{eqnarray*}
    Taking supremum over all such $x_1$ and $y_1$ with $\|x_1\| = \|y_1\| = 1$ and $\la x_1, y_1 \ra = q,$ it follows that
    \begin{equation} \label{eq12}
        \omega_q \left( \begin{bmatrix} A & B\\
C &D \end{bmatrix} \right) \geq \omega_q(A).
    \end{equation}
    In a similar way, it can be proved that 
    \begin{equation} \label{eq13}
        \omega_q \left( \begin{bmatrix} A & B\\
C &D \end{bmatrix} \right) \geq \omega_q(D).
    \end{equation}
    Finally from the relations (\ref{eq10}), (\ref{eq12}), and (\ref{eq13}) we get 
    \[ \max \left\lbrace \omega_q(A), \omega_q(D), \omega_q \left( \begin{bmatrix} 0 & B\\
C & 0 \end{bmatrix} \right)\right\rbrace \leq \omega_q \left( \begin{bmatrix} A & B\\
C &D \end{bmatrix} \right).\]

\item[(ii)] Note that the $q$-numeical range forms a semi-norm and the following relation holds
\begin{equation} \label{eq14}
   \omega_q \left( \begin{bmatrix} A & B\\
C &D \end{bmatrix} \right) \leq \omega_q \left( \begin{bmatrix} A & 0\\
0 &D \end{bmatrix} \right) + \omega_q \left( \begin{bmatrix} 0 & B\\
0 & 0 \end{bmatrix} \right) + \omega_q \left( \begin{bmatrix} 0 & 0\\
C &0 \end{bmatrix} \right).
\end{equation}
Since $\begin{bmatrix} 0 & B\\
0 & 0 \end{bmatrix}^2 = \begin{bmatrix} 0 & 0\\
C & 0 \end{bmatrix}^2 = \begin{bmatrix} 0 & 0\\
0 & 0 \end{bmatrix},$ Theorem 2.5 of \cite{fakhri2022q} and (\ref{eq14}) imply that

\begin{equation} \label{eq15}
    \omega_q \left( \begin{bmatrix} A & B\\
C & D \end{bmatrix} \right) \leq \omega_q \left( \begin{bmatrix} A & 0\\
0 & D \end{bmatrix} \right) + \left(1 - \frac{3q^2}{4} + q \q \right)^{\frac{1}{2}} (\|B\| + \|C\|).
\end{equation}
Let $\begin{pmatrix} x_1  \\ x_2 \end{pmatrix} , \begin{pmatrix} y_1  \\ y_2 \end{pmatrix} \in \hhh,$ with
\[ \norm{\begin{pmatrix} x_1  \\ x_2 \end{pmatrix} } = \norm{\begin{pmatrix} y_1  \\ y_2 \end{pmatrix}} = 1, \ \mbox{and } \la \begin{pmatrix} x_1  \\ x_2 \end{pmatrix} , \begin{pmatrix} y_1  \\ y_2 \end{pmatrix} \ra = q. \]
Observe that
\begin{eqnarray*}
  \left| \la \begin{bmatrix} A & 0 \\
0 &D \end{bmatrix} \begin{pmatrix} x_1  \\ x_2 \end{pmatrix}, \begin{pmatrix} y_1  \\ y_2 \end{pmatrix}  \ra  \right| & \leq & |\la Ax_1, y_1 \ra| + |\la Dx_2, y_2 \ra| \\
& \leq & \|A\| \|x_1\| \|y_1\| + \|D\| \|x_2\| \|y_2\| \\
& \leq & \max \{\|A\|, \|D\|\}.
\end{eqnarray*}
Hence we have the following result.
\begin{equation} \label{eq16}
    \omega_q \left( \begin{bmatrix} A & 0\\
0 &D \end{bmatrix} \right) \leq \max \{\|A\|, \|D\|\}.
\end{equation}
The required upper bound of  $\omega_q \left( \begin{bmatrix} A & B\\
C & D \end{bmatrix} \right)$ follows from the inequalities (\ref{eq15}) and (\ref{eq16}).

\item[(iii)] To prove the last inequality let 
$\begin{pmatrix} x_1  \\ x_2 \end{pmatrix} , \begin{pmatrix} y_1  \\ y_2 \end{pmatrix} \in \hhh,$ with
\[ \norm{\begin{pmatrix} x_1  \\ x_2 \end{pmatrix} } = \norm{\begin{pmatrix} y_1  \\ y_2 \end{pmatrix}} = 1, \ \mbox{and } \la \begin{pmatrix} x_1  \\ x_2 \end{pmatrix} , \begin{pmatrix} y_1  \\ y_2 \end{pmatrix} \ra = q. \]
In this setting, we can take 
\[y_1 = qx_1 + \q z_1, \ \mbox{and } y_2 = qx_2 + \q z_2\]
where $z_1 \in \h, \ z_2 \in \hh$ with 
\[ \norm{\begin{pmatrix} z_1  \\ z_2 \end{pmatrix}} = 1, \ \mbox{and } \la \begin{pmatrix} x_1  \\ x_2 \end{pmatrix} , \begin{pmatrix} z_1  \\ z_2 \end{pmatrix} \ra = 0.\]
We assume that $\|x_1\| = \cos \theta, \ \|x_2\| = \sin \theta, \ \|z_1\| = \cos \phi, \ \mbox{and} \ \|z_2\| = \sin \phi$ where $\theta, \phi \in [0, \frac{\pi}{2}].$ Also, we use the fact that for any $a,b \in \mathbb{R},$
\begin{enumerate}
    \item[(1)] $\max\limits_\theta (a \cos \theta + b \sin \theta) = \sqrt{a^2 + b^2},$
    \item[(2)] $\max\limits_\theta (a \cos^2 \theta + b \sin^2 \theta) = \max \{a,b\}.$
\end{enumerate}
In this setting we have
\begin{equation} \label{eq26}
    \left| \la \begin{bmatrix} A & B\\
C &D \end{bmatrix} \begin{pmatrix} x_1  \\ x_2 \end{pmatrix}, \begin{pmatrix} y_1  \\ y_2 \end{pmatrix}  \ra  \right| \leq |\la Ax_1, y_1 \ra| + |\la Bx_2, y_1 \ra| + |\la Cx_1, y_2 \ra| + |\la Dx_2, y_2 \ra|.
\end{equation}
The subsequent computations are mentioned below.
\begin{eqnarray*}
     & & |\la Ax_1, y_1 \ra| + |\la Bx_2, y_1 \ra| + |\la Cx_1, y_2 \ra| + |\la Dx_2, y_2 \ra| \\
        & = & |\la Ax_1, qx_1 + \q z_1 \ra| + |\la Bx_2, qx_1 + \q z_1 \ra| \\
        && + |\la Cx_1, qx_2 + \q z_2 \ra| + |\la Dx_2, qx_2 + \q z_2 \ra| \\
        & \leq & q (|\la Ax_1, x_1 \ra| + |\la Bx_2, x_1 \ra| + |\la Cx_1, x_2 \ra| + |\la Dx_2, x_2 \ra|)\\
        && + \q \left(|\la Ax_1, z_1 \ra| + |\la Bx_2, z_1 \ra| + |\la Cx_1, z_2 \ra| + |\la Dx_2, z_2 \ra|\right) \\
        & \leq & q \left( \omega(A) \cos^2 \theta + \omega(D) \sin^2 \theta + \frac{1}{2}(\|B\|+\|C\|) \sin 2\theta \right) + \q \left( \|A\| \cos \theta \cos \phi \right.\\
        && \left. +  \|B\| \sin \theta \cos \phi + \|C\| \cos \theta \sin \phi + \|D\| \sin \theta \sin \phi    \right)\\
        & \leq & q \left( \max \{\omega(A), \omega(D)\} + \frac{\|B\| + \|C\|}{2}  \right) + \q \left( \|A\|^2 + \|B\|^2 + \|C\|^2 + \|D\|^2\right)^{\frac{1}{2}}.
\end{eqnarray*}
The required result follows from the above inequality and relation (\ref{eq26}).
\end{enumerate}
\end{proof}

\begin{remark}
    Below are a few highlights of the aforementioned theorem. 
    \begin{enumerate}
        \item[(i)] If $B=C=0$ in Theorem \ref{th6}(i) and (ii), it follows 
        \[\max \left\lbrace \omega_q(A), \omega_q(D)\right\rbrace \leq \omega_q \left( \begin{bmatrix} A & 0 \\
0 &D \end{bmatrix} \right) \leq \max\{\|A\|, \|D\|\}.\]

The above relation implies that $\max \left\lbrace \omega_q(A), \omega_q(D)\right\rbrace$ actually provides a lower bound of $\omega_q \left( \begin{bmatrix} A & 0 \\
0 &D \end{bmatrix} \right)$ where it is proved that $q$-numerical radius fails to satisfy an analogous relation as of equation (\ref{eq25}).

\item[(ii)] If $q = 1$ then first and third relations of Theorem \ref{th6} reduces to existing lower and upper bounds of $\omega \left( \begin{bmatrix} A & B \\
C &D \end{bmatrix} \right)$ mentioned in \cite{hirzallah2012numerical}.

\item[(iii)] The upper bounds obtained in (ii) and (iii) of Theorem \ref{th6} are noncomparable. Let $T=\begin{bmatrix}
    a & b \\
    c  & d
\end{bmatrix}=\begin{bmatrix}
    1.5442 + 1.4193i & 0.0859 + 0.2916i \\
    -1.4916 + 0.1978i & -0.7423 + 1.5877i
\end{bmatrix}$, randomly generated by MATLAB command \texttt{randn}. Figure \ref{figend}, demonstrates the comparison of the upper bounds (ii) and (iii) of Theorem \ref{th6} for the matrix $T$. 
  \end{enumerate}

\begin{figure}[h]
\caption{Comparision of upper bounds (ii) and (iii) of Theorem \ref{th6} for $\omega_q(T)$}
\centering
\includegraphics[scale=.40]{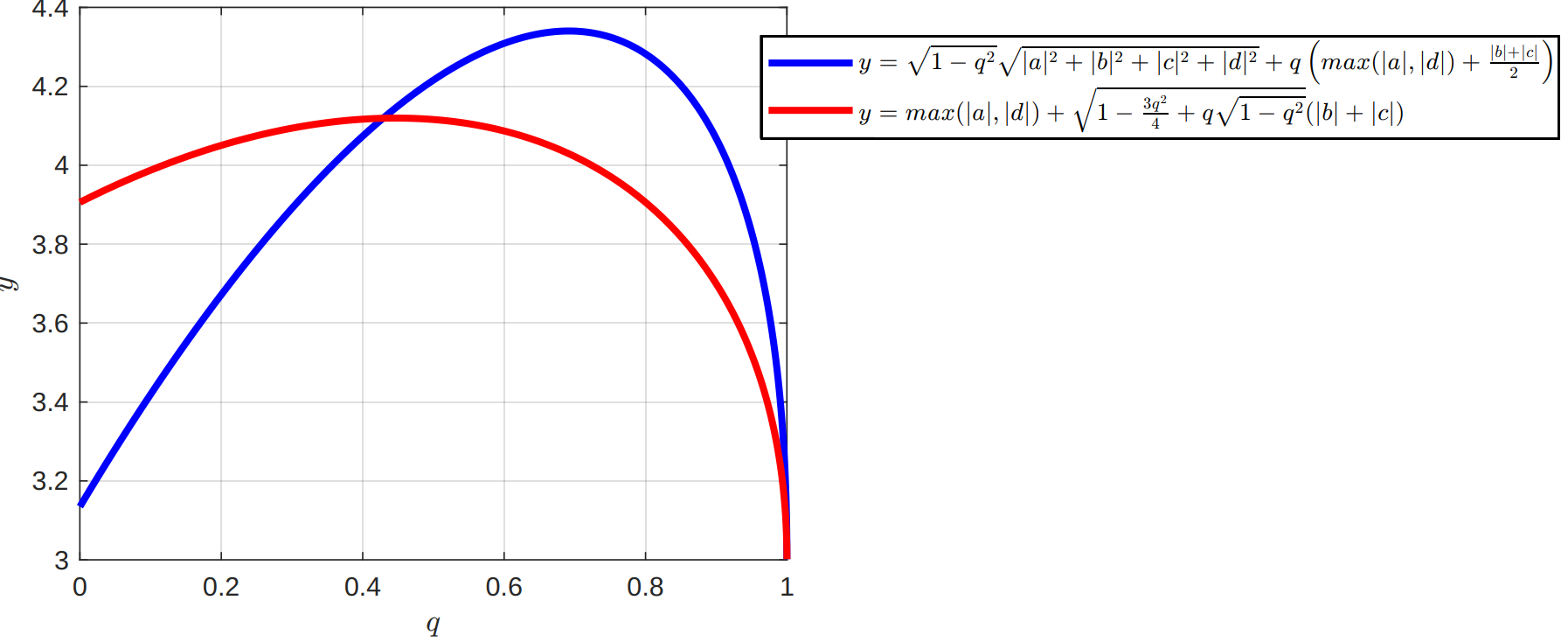}
\label{figend}
\end{figure}

\end{remark}

\section*{Disclosure statement}
No potential conflict of interest is reported by the authors.

\bibliographystyle{tfnlm}
\bibliography{interactnlmsample}

\begin{thebibliography}{10}
\providecommand{\url}[1]{\normalfont{#1}}
\providecommand{\urlprefix}{Available from: }

\bibitem{kittaneh2003numerical}
Kittaneh~F. A numerical radius inequality and an estimate for the numerical
  radius of the frobenius companion matrix. Studia Math.
  2003;\hspace{0pt}158(1):11--17.

\bibitem{kittaneh2005numerical}
Kittaneh~F. Numerical radius inequalities for {H}ilbert space operators. Studia
  Math. 2005;\hspace{0pt}168(1):73--80.

\bibitem{bhunia2022lectures}
Bhunia~P, Dragomir~SS, Moslehian~MS, et~al. Lectures on numerical radius
  inequalities. Infosys Science Foundation Series in Mathematical Sciences,
  Springer. 2022;\hspace{0pt}.

\bibitem{marcus1977constrained}
Marcus~M, Andresen~P. Constrained extrema of bilinear functionals. Monatsh
  Math. 1977;\hspace{0pt}84(3):219--235.

\bibitem{tsing1984constrained}
Tsing~NK. The constrained bilinear form and the c-numerical range. Linear
  Algebra Appl. 1984;\hspace{0pt}56:195--206.

\bibitem{li1994generalized}
Li~CK, Mehta~PP, Rodman~L. A generalized numerical range: the range of a
  constrained sesquilinear form. Linear Multilinear Algebra.
  1994;\hspace{0pt}37(1-3):25--49.

\bibitem{li1998some}
Li~CK, Nakazato~H. Some results on the q-numerical. Linear Multilinear Algebra.
  1998;\hspace{0pt}43(4):385--409.

\bibitem{chien2002davis}
Chien~MT, Nakazato~H. Davis--wielandt shell and q-numerical range. Linear
  Algebra Appl. 2002;\hspace{0pt}340(1-3):15--31.

\bibitem{chien2007q}
Chien~MT, Nakazato~H. The q-numerical radius of weighted shift operators with
  periodic weights. Linear Algebra Appl. 2007;\hspace{0pt}422(1):198--218.

\bibitem{chien2012numerical}
Chien~MT. The numerical radius of a weighted shift operator. RIMS
  K{\^o}ky{\^u}roku. 2012;\hspace{0pt}1778:70--77.

\bibitem{duan2009perfect}
Duan~R, Feng~Y, Ying~M. Perfect distinguishability of quantum operations. Phys
  Rev Lett. 2009;\hspace{0pt}103(21):210501.

\bibitem{fakhri2022q}
Fakhri~Moghaddam~S, Kamel~Mirmostafaee~A, Janfada~M. q-numerical radius
  inequalities for {H}ilbert space. Linear Multilinear Algebra.
  2022;\hspace{0pt}:1--13.

\bibitem{gau2021numerical}
Gau~HL, Wu~PY. Numerical ranges of hilbert space operators. Vol. 179. Cambridge
  University Press; 2021.

\bibitem{stampfli1970norm}
Stampfli~J. The norm of a derivation. Pac J Math.
  1970;\hspace{0pt}33(3):737--747.

\bibitem{prasanna1981norm}
Prasanna~S. The norm of a derivation and bjorck-thomee'lstratescu theorem. Math
  Japon. 1981;\hspace{0pt}26:585--588.

\bibitem{furuta1986simplified}
Furuta~T. A simplified proof of heinz inequality and scrutiny of its equality.
  Proc Amer Math Soc. 1986;\hspace{0pt}97(4):751--753.

\bibitem{nakazato1994c}
Nakazato~H. The c-numerical range of a 2$\times$ 2 matrix. Sci Rep Hirosaki
  Univ. 1994;\hspace{0pt}41:197--206.

\bibitem{bhatia2013matrix}
Bhatia~R. Matrix analysis. Vol. 169. Springer Science \& Business Media; 2013.

\bibitem{hirzallah2012numerical}
Hirzallah~O, Kittaneh~F, Shebrawi~K. Numerical radius inequalities for
  2$\times$ 2 operator matrices. Studia Math. 2012;\hspace{0pt}210(2):99--115.

\end{thebibliography}

\end{document}